\newtheorem{theorem}{Theorem}[section]
\newtheorem{rem}[theorem]{Remark}
\newtheorem{lemma}[theorem]{Lemma}
\newtheorem{proposition}[theorem]{Proposition}
\newtheorem{conjecture}[theorem]{Conjecture}
\newtheorem{corollary}[theorem]{Corollary}
\newcommand\Q{{\mathbb{Q}}}
\def\cO{{\mathcal O}}
\def\cH{{\mathcal H}}
\def\cF{{\mathcal F}}
\def\cC{{\mathcal C}}
\begin{document}
\title[]{Characteristic foliation on non-uniruled smooth divisors on 
hyperk\"ahler manifolds}

\author{Ekaterina Amerik, Fr\'ed\'eric Campana}

\address{Institut Elie Cartan \\
Universit\'e Henri Poincar\'e\\
B. P. 70239, F-54506 Vandoeuvre-l\`es-Nancy Cedex, France\\
and: Institut Universitaire de France}
\email{frederic.campana@univ-lorraine.fr}

\address{National Research University Higher School of Economics\\
Department of Mathematics\\
Laboratory of Algebraic Geometry\\
Vavilova 7, 117332 Moscow, Russia}

\email{ekaterina.amerik@gmail.com}

\begin{abstract} 
We prove that the characteristic foliation $\cF$ on a non-singular divisor $D$ in an 
irreducible projective hyperk\"ahler manifold $X$ cannot be algebraic, unless 
the leaves of $\cF$ are rational curves or $X$ is a surface. More generally, 
we show that
if $X$ is an arbitrary projective manifold carrying a holomorphic symplectic $2$-form, and $D$ and $\cF$
are as above, then $\cF$ can be algebraic with non-rational leaves only when, 
up to a finite \'etale cover, $X$ is the product of a symplectic projective 
manifold $Y$ with a symplectic surface and $D$ is the pull-back of a curve on this 
surface.  

When $D$ is of general type, the fact that $\cF$ cannot be algebraic unless
$X$ is a surface 
was proved by Hwang and Viehweg. The main new ingredient for our results is 
the observation that the canonical class of 
the (orbifold) base of the family of leaves is zero. This implies, 
in particular, the isotriviality of the family of leaves of $\cF$.

\

 R\'ESUM\'E: Nous montrons que si le feuilletage caract\'eristique $\cF$ d'un diviseur $D$ lisse d'une vari\'et\'e projective complexe symplectique irr\'eductible $X$ est alg\'ebrique, alors ou bien $X$ est une surface, ou bien les feuilles de $\cF$ sont des courbes rationnelles. Lorsque $D$ est de type g\'en\'eral, ce r\'esultat est d\^u \`a Hwang et Viehweg. Nous en d\'eduisons, lorsque $X$ est une vari\'et\'e projective complexe arbitraire munie d'une $2$-forme symplectique holomorphe, et $D$, $\cF$ comme ci-dessus, que si les feuilles $X$ sont des courbes alg\'ebriques non-rationnelles, alors, apr\`es rev\^etement \'etale fini, $X$ est le produit d'une surface $K3$ ou ab\'elienne $S$ par une vari\'et\'e symplectique $Y$, et $D=C\times Y$ pour une courbe $C\subset S$.

L'ingr\'edient principal nouveau de la d\'emonstration est l'observation que la classe canonique (orbifolde) de la base de la famille des feuilles est triviale. Ceci implique, en particulier, l'isotrivialit\'e de la famille des feuilles de $\cF$.

\end{abstract}

\maketitle

\section{Introduction}

Let $X$ be a projective manifold equipped with a holomorphic symplectic
form $\sigma$. Let $D$ be
a smooth divisor on $X$. At each point of $D$, the restriction of $\sigma$
to $D$ has one-dimensional kernel. This gives a non-singular foliation $\cF$ on 
$D$, called {\it the characteristic foliation}. We say that $\cF$ is {\it algebraic} if all its leaves 
are compact complex curves.

If $D$ is uniruled, the characteristic foliation $\cF$ is always algebraic. 
Indeed, its leaves are the
fibres of the rational quotient fibration on $D$ (see for example \cite{AV}, Section 4). 
On the other hand, J.-M. Hwang and E. Viehweg proved in \cite{HV} that $\cF$ cannot be
algebraic when $D$ is of general 
type, except for the trivial case when $dim(X)=2$.
The aim of this article is to classify the examples where $\cF$ is algebraic 
and $D$ is not uniruled.

Our main result is as follows.

\begin{theorem}\label{isotriviality} Let $X$ be a projective manifold with a holomorphic 
symplectic form $\sigma$ and let $D$ be a smooth hypersurface 
in $X$. If $\cF$ as above is algebraic and the genus of its general leaf is $g>0$, 
then the associated fibration is isotrivial and $K_D$ is nef and abundant, with
$\nu(K_D)=\kappa(D)=1$ when $g\geq 2$ and $\nu(K_D)=\kappa(D)=0$ when $g=1$.
\end{theorem}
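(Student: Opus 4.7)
The plan is to exhibit a holomorphic symplectic form on the base $B$ of the fibration $f\colon D\to B$ associated to the algebraic foliation $\cF$, deduce isotriviality from $K_B^{\mathrm{orb}}\equiv 0$ via variational results, and finally compute $K_D$ after a finite étale base change.

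For the first step, since $\cF$ is by definition the kernel of $\sigma|_D$, any vector field $v$ tangent to $\cF$ satisfies $\iota_v(\sigma|_D)=0$; combining this with $d\sigma=0$ and Cartan's formula yields
$$\mathcal{L}_v(\sigma|_D)=d\iota_v(\sigma|_D)+\iota_v d(\sigma|_D)=0.$$
Hence $\sigma|_D$ is basic for $\cF$ and descends to a holomorphic $2$-form $\sigma_B$ on the (orbifold) leaf space $B$. Since $\sigma|_D$ has maximal rank $2n-2=\dim B$ everywhere on $D$, the form $\sigma_B$ is symplectic, so $K_B^{\mathrm{orb}}\equiv 0$; this is the key new input announced in the abstract.

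Isotriviality of $f$ is then derived from variational results for smooth families of genus-$g$ curves over a base with trivial orbifold canonical class. For $g\geq 2$, the Viehweg--Zuo theorem bounds $\mathrm{Var}(f)$ by the log-Kodaira dimension of the base, which is $0$ here, forcing $f$ to be isotrivial. For $g=1$, the $j$-invariant gives a holomorphic map from the proper orbifold $B$ to the affine $j$-line and is therefore constant, again giving isotriviality. With isotriviality in hand, the monodromy of $f$ lies in $\mathrm{Aut}(F)$ for a fixed curve $F$ of genus $g$, hence is finite, and a suitable finite étale base change $B'\to B$ makes the pulled-back fibration into the trivial product $F\times B'\to B'$; using $K_{B'}\equiv 0$ one obtains $K_{F\times B'}\equiv p_F^{*}K_F$. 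For $g=1$ this vanishes, so $K_D$ is numerically trivial and $\kappa(D)=\nu(K_D)=0$. For $g\geq 2$, $K_F$ is ample, $p_F^{*}K_F$ is nef and semiample with Iitaka dimension $1$, and its square vanishes (because $K_F^{2}=0$ on a curve), so $\nu(K_D)=\kappa(D)=1$; these properties descend from the étale cover to $D$.

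The main expected obstacle is the isotriviality step for $g\geq 2$, which invokes the deep Viehweg--Zuo variation theorem. A subsidiary technical point is the careful treatment of the orbifold structure on $B$: one must verify that the symplectic form descends without singularities that would spoil triviality of $K_B^{\mathrm{orb}}$, that the leaf space is proper (so that the $g=1$ argument via $B\to \mathbb{A}^{1}$ applies), and that the moduli map in the $g\geq 2$ case is well-defined on the full orbifold base rather than just on a Zariski-open subset.
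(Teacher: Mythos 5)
Your overall strategy---descend the form to trivialize the (orbifold) canonical class of the leaf space, deduce isotriviality from variation-of-moduli results, then compute $K_D$ after a trivializing base change---is the same as the paper's, but two steps have genuine gaps. First, the point you defer as a ``subsidiary technical point'' (that the form descends with no codimension-one degeneration, equivalently that $f$ has no multiple fibres in codimension one, so that $\Delta=0$ and $K_B$ itself, not just an orbifold class, is trivial) is precisely the paper's key new lemma; it is proved there by an explicit local estimate on the descended form (a Laurent-coefficient bound forcing the multiplicity to be $1$), or alternatively by Reeb stability plus a local invariance computation. Without it, your later use of $K_{B'}\equiv 0$ and of an honest product structure is unfounded. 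Second, the inequality you attribute to Viehweg--Zuo ($\mathrm{Var}(f)\le$ log-Kodaira dimension of the base) is not the Viehweg--Zuo theorem: that theorem only produces an invertible subsheaf of $\Sym^m\Omega^1(\log)$ with Kodaira dimension at least $\mathrm{Var}(f)$, and converting this into isotriviality over a base of Kodaira dimension zero is exactly the content of Campana--P\u{a}un's generic semipositivity (or Taji's theorem), which moreover is formulated for smooth quasi-projective or log-smooth bases, whereas here $B$ is a normal projective variety with quotient singularities and the family is smooth only off a codimension-two set. The paper bridges this by restricting to complete-intersection curves avoiding $\mathrm{Sing}(B)$ and pairing the semipositivity statement with the Hwang--Viehweg Kodaira--Spencer subsheaf $\cH\subset\Omega^1_{B^0}$ whose determinant has Kodaira dimension $\mathrm{Var}(f)$. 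Your $g=1$ argument via the $j$-map is fine and is the one used in the paper.

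The final step fails as written. For $g=1$ the structure group $\Aut(F)$ contains all translations, hence is not finite, and an isotrivial elliptic fibration need not become a product after any finite \'etale base change; the paper only obtains a principal bundle and computes $K_{D''/B''}$ as the dual of $f''^{*}R^1f''_{*}\cO_{D''}$, trivial because translations act trivially on cohomology. Moreover, even for $g\ge 2$, $f$ can have multiple fibres in codimension two (the paper gives an explicit example of this), and no finite \'etale base change of $B$ can remove a multiple fibre, so no such cover turns $f$ into a product $F\times B'$. The paper instead base-changes along $D\to B$ itself and normalizes ($f_D:(D\times_B D)^{\nu}\to D$ is smooth by Reeb stability), trivializes this smooth family after a further finite cover, and then transports nefness, $\kappa$ and $\nu$ back through surjective (not \'etale) morphisms, using $K_D\equiv K_{D/B}$ because $K_B$ is trivial. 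Your numerical conclusions are correct, but this repair is needed for the argument to close.
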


Here $\nu$ denotes the numerical dimension and $\kappa$ the Kodaira dimension.
In general, $\kappa(D)$ does not exceed $\nu(K_D)$, 
and $K_D$ is said to be abundant when the two dimensions coincide 
(by a 
result of Kawamata, this implies the
semiampleness of $K_D$, so this notion is important in the minimal model program).

What we actually are going to prove is a slightly more general result.
Consider a smooth projective variety $D$ of dimension $d$ carrying a nowhere vanishing holomorphic 
$(d-1)$-form $\omega$. Such a form has one-dimensional kernel at each point and therefore defines
a smooth rank-one foliation ${\cF}$. Alternatively, the foliations arising in this way are 
those defined by the subbundles of $T_D$ isomorphic to the anticanonical bundle of $D$.
In this situation, we have the following

\begin{theorem}\label{isotriviality-form} If $\cF$ is algebraic, 
then the associated fibration $f:D\to B$ is isotrivial without multiple fibers in codimension one and 
the canonical class $K_B$ is trivial.
\end{theorem}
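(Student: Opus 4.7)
The plan is to exploit the identification $\cL \cong -K_D$, where $\cL \subset T_D$ is the foliation line bundle defined as the kernel of contraction with $\omega$. The nowhere-vanishing hypothesis on $\omega \in H^0(\Omega^{d-1}_D) = H^0(T_D \otimes K_D)$ produces this identification by duality: the corresponding nowhere-zero section $\cO_D \hookrightarrow T_D \otimes K_D$ gives an injection $-K_D \hookrightarrow T_D$ with image $\cL$. Algebraicity of $\cF$ then supplies the fibration $f: D \to B$, whose relative tangent coincides with $\cL$, so $T_{D/B} \cong -K_D$.

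The first concrete step is to compute the orbifold canonical class $K_{B^*}$ of the base, where $B^* = (B, \sum(1-\tfrac{1}{m_i})E_i)$ records the codimension-one multi-fiber loci $E_i\subset B$ with multiplicity $m_i$ (so $f^*E_i = m_i F_i'$). In local coordinates $(z,t,y)\mapsto (t^{m_i}, y)$ near a general point of $F_i'$, the differential $df: T_D \to f^*T_B$ has cokernel a torsion sheaf of length $m_i-1$ along $F_i'$. Taking determinants in the four-term sequence $0\to T_{D/B} \to T_D \to f^*T_B \to C \to 0$ yields
\[
K_D = -T_{D/B} + f^*K_B + \sum(m_i-1)F_i'.
\]
Substituting $T_{D/B} = -K_D$ and $(m_i-1)F_i' = (1-\tfrac{1}{m_i})f^*E_i$ collapses this to $f^*K_{B^*} = 0$, so $K_{B^*}$ is trivial in $\Pic_\Q(B)$.

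The next step is to upgrade $K_{B^*} = 0$ to isotriviality of $f$. Here I would invoke an orbifold version of the Viehweg--Zuo hyperbolicity theorem (in the framework developed by Campana and by Kebekus--Kov\'acs): a non-isotrivial family of positive-genus curves over an orbifold base $B^*$ produces positive pluri-differentials, forcing $K_{B^*}$ to carry nontrivial positivity; consequently $K_{B^*} \equiv 0$ prohibits variation of moduli. The case $g \geq 2$ rests on classical Arakelov--Parshin-type estimates applied to the orbifold base, while genus one is handled via the $j$-invariant map and the corresponding logarithmic estimates.

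Finally, isotriviality combined with $K_{B^*} = 0$ yields the remaining assertions. A suitable finite cover $\pi: \tilde B \to B$ adapted to the orbifold multiplicities trivializes the family as $\tilde D \cong F \times \tilde B$, with $\pi': \tilde D \to D$ \'etale (smoothness of $D$ forces the Galois action on $\tilde D$ to be free). Pulling back $\omega$ produces a nowhere-vanishing section of $\Omega^{d-1}_{\tilde D}$ whose kernel is $T_F$; by the K\"unneth splitting of $\Omega^{d-1}_{F \times \tilde B}$, this section descends to a nowhere-vanishing $\tilde\eta \in H^0(\tilde B, K_{\tilde B})$. If any $m_i > 1$, the corresponding $\mu_{m_i} \subset \mathrm{Gal}(\tilde B/B)$ acts non-trivially on $K_{\tilde B}$ at the fixed locus $\tilde E_i$, forcing every invariant section to vanish there--a contradiction. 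Hence $m_i = 1$ for all $i$, so $f$ has no multiple fibers in codimension one, and then $K_B = K_{B^*} = 0$. The principal obstacle is the isotriviality step: securing the right orbifold Viehweg--Zuo input uniformly in $g$, particularly in the genus-one case where the standard canonical polarization arguments do not apply directly.
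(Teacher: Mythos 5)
Your Chern-class computation in the first half is correct and gives a clean derivation of the fact that the \emph{orbifold} canonical class is trivial: from $0\to \cF\to T_D\to f^*T_B\to C\to 0$, $\cF\cong -K_D$ and $\det C=\sum(m_i-1)F_i'$ you get $f^*(K_B+\Delta)\sim_{\Q}0$, hence $K_B+\Delta\sim_{\Q}0$. But this is weaker than what the paper establishes at the corresponding stage, and the weakness is exactly where your argument breaks: to pass from $K_B+\Delta\equiv 0$ to isotriviality you invoke ``an orbifold version of Viehweg--Zuo'' for families of curves over the orbifold base $(B,\Delta)$ with $\Delta$ recording the multiple fibres of $f$ itself. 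That statement is not an available black box: it is precisely the ``much more subtle'' orbifold case which the authors explicitly defer to a later paper (see their Conjecture 2.10 and the remark following it), requiring the full orbifold generic semi-positivity of Campana--P\u aun together with the full Hwang--Viehweg argument. The whole point of the paper's proof is to avoid this: they show \emph{first}, directly from the nowhere-vanishing of $\omega$, that $\Delta=0$ and $K_B$ is trivial (Lemma 2.3, by descending $\omega$ to $B\setminus E$ and showing via the Laurent-coefficient bound that the descended form extends, forcing $m=1$; equivalently Remark 2.4, since a holonomy-invariant nowhere-vanishing form cannot exist on the Reeb covering of a codimension-one multiple fibre). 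Only then do they apply the non-orbifold ($\Delta=0$) semi-positivity theorem plus the Kodaira--Spencer subsheaf of Hwang--Viehweg to kill $\mathrm{Var}(f)$ for $g\geq 2$ (the $g=1$ case being immediate from the $j$-map, as you note).

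Your closing step also has local defects: the existence of a global cover of $B$ ``adapted to the orbifold multiplicities'' which trivializes the family needs justification; the assertion that smoothness of $D$ forces the Galois action to be free is false (smooth quotients by non-free actions exist, and indeed the paper's Remark 2.4/2.9 shows multiple fibres genuinely occur in codimension two); and for $g=1$ the cover gives only a principal bundle, not a product. The irony is that the sound kernel of that step -- the $\mu_{m}$-invariant nowhere-vanishing volume form on the local transversal must vanish along the fixed locus -- needs neither isotriviality nor a global cover: run locally via Reeb stability it already proves $\Delta=0$ (this is the paper's Remark 2.4), after which $K_B$ is trivial by descent of $\omega$ outside codimension two, and the isotriviality step can then be done with the simple, non-orbifold semi-positivity input. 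So the missing idea is not a computation but the order of the argument: establish the absence of codimension-one multiple fibres \emph{before}, not after, the isotriviality, so that no orbifold Viehweg--Zuo-type theorem is needed.
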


We refer to subsection 2.1 for the definition and discussion of the fibration
associated to a smooth algebraic foliation of rank one.

When $D$ is a divisor in a holomorphic symplectic manifold $(X,\sigma)$ of dimension $d+1=2n$, 
one recovers the first part of  
theorem \ref{isotriviality} by taking the form $\sigma^{\wedge{(n-1)}}$ for $\omega$, since the
kernel of $\sigma$ is then equal to that of $\omega$; the assertions on the numerical and Kodaira
dimension are deduced from theorem \ref{isotriviality-form} in a standard way.

The next two theorems are consequences of theorem \ref{isotriviality}.

\begin{theorem}\label{irreducibleHK} Let $X$, $D$, $\cF$ be as in theorem 
\ref{isotriviality}, and 
suppose moreover
that $X$ is irreducible (that is, simply connected and with $h^{2,0}(X)=1$).
If $\cF$ is algebraic and $D$ is not uniruled, then $dim(X)=2$.
\end{theorem}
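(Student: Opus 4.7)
The plan is to assume $\dim X=2n>2$ and derive a contradiction. Because $D$ is not uniruled, the generic leaf of $\cF$ cannot be rational (otherwise $D$ would be swept out by its rational leaves), so its genus satisfies $g\ge 1$, and Theorem~\ref{isotriviality} applies: the associated fibration $f:D\to B$ is isotrivial and $K_D$ is nef and abundant with $\nu(K_D)=\kappa(D)\le 1$.

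Next I would exploit irreducibility of $X$ to compute the Beauville--Bogomolov square of $[D]$. Adjunction combined with $K_X=0$ gives $K_D=\cO_X(D)|_D$. Since $\dim D=2n-1\ge 3$ while $\nu(K_D)\le 1$, the top self-intersection $(\cO_X(D)|_D)^{2n-1}$ vanishes on $D$, and by the projection formula $D^{2n}=0$ on $X$. The Fujiki relation $D^{2n}=c_n\,q(D,D)^n$ with $c_n\neq 0$ then forces $q(D,D)=0$.

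I would then invoke Boucksom's theorem on the pseudo-effective cone of an irreducible hyperk\"ahler manifold: a prime divisor with strictly negative BBF-square is uniruled. Since $D$ is effective, non-uniruled, and satisfies $q(D,D)=0$, its class $[D]$ sits on the boundary of the positive cone; the finer decomposition of the pseudo-effective cone (into a ``movable'' part and the cone spanned by uniruled divisors) then gives that $[D]$ is in fact nef.

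The final step is to combine ``$[D]$ nef with $q=0$'' with the extra structure coming from Theorem~\ref{isotriviality}---an isotrivial fibration $f:D\to B$ along which $\sigma|_D$ descends to a symplectic form $\sigma_B$ on a base $B$ with $K_B=0$---to force $\dim X=2$. I expect the hard part to lie here. The most natural tool is Matsushita's theorem: any non-trivial surjective morphism with connected fibers from $X$ to a normal base of dimension strictly between $0$ and $2n$ is Lagrangian, with $n$-dimensional base and abelian $n$-folds as general fibers. When $n\ge 2$ the $1$-dimensional leaves of $\cF$ cannot be squeezed inside these $n$-dimensional Lagrangian fibers, and a careful comparison of the two fibration structures on $D$ should yield the contradiction. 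The subtle point is producing the required fibration from $[D]$, which amounts to semi-ampleness of a multiple of $[D]$, an abundance-type statement; an alternative plan, once the \'etale cover has trivialized $f:D\to B$ so that $D\simeq C\times B$, is to use Lefschetz (in high enough dimension) to transfer the symplectic form $\sigma_B$ on $B$ to an independent holomorphic $2$-form on $X$, contradicting $h^{2,0}(X)=1$.
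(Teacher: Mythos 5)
Your opening reductions are fine: $g\ge 1$, theorem \ref{isotriviality} applies, $K_D=\cO_X(D)|_D$ is nef with $\nu(K_D)=\kappa(D)\le 1$, hence $D^{2n}=(K_D)^{2n-1}=0$ and $q(D,D)=0$ by Fujiki. But the conclusion is exactly where the content lies, and neither of your two routes closes it. The Matsushita route presupposes a fibration of $X$ attached to $[D]$, i.e.\ semi-ampleness of a multiple of $\cO_X(D)$ with $q(D,D)=0$; as you yourself note, this is an abundance-type statement --- it is precisely the Lagrangian (SYZ) conjecture, which is open, and in fact the paper runs the logic in the opposite direction, deducing a case of that conjecture from this very theorem in Section \ref{lagrang}, so you cannot assume it here. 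The Lefschetz alternative also fails: $D$ is not ample (its Beauville--Bogomolov square is $0$), so the Lefschetz hyperplane theorem gives no comparison of $h^{2,0}(X)$ with $h^{2,0}(D)$; moreover the splitting $D''\cong C\times B''$ only holds after a finite cover of $D$, where $h^{2,0}$ may jump, so even a $2$-form on that cover contradicts nothing about $X$. Finally, the appeal to Boucksom's decomposition to get nefness of $D$ is both doubtful (a non-exceptional prime divisor is only modified nef; $q(D,D)=0$ does not formally upgrade this to nef) and unnecessary: $D$ is effective and $\cO_X(D)|_D=K_D$ is nef by corollary \ref{liso}, so $D\cdot C\ge 0$ for every curve, whether or not it lies in $D$.

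The paper's argument needs no fibration on $X$ at all. With $D$ nef, lemma \ref{nu} gives the dichotomy $\nu(X,D)=\dim X$ or $\nu(X,D)=\dim X/2$, while $\nu(X,D)=\nu(K_D)+1\le 2$; hence either $\dim X=2$, or $\dim X=4$ with $\nu(X,D)=2$, $\nu(K_D)=\kappa(D)=1$ and $g\ge 2$. In the residual four-dimensional case one uses the Iitaka fibration $\phi:D\to C$ of $D$ itself, which is a regular map because $K_D$ is nef and abundant (this is already secured by theorem \ref{isotriviality}, not by any conjecture): its surface fibres $S$ are numerically $D^2$, hence Lagrangian since $\int_S\sigma\bar\sigma=q(D,D)=0$, so the leaves of $\cF$ lie in $S$ and would fibre $S$ in curves of genus $\ge 2$; but $S$ is a minimal surface of Kodaira dimension zero, which admits no such fibration. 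The missing idea in your attempt is thus to exploit the numerical-dimension dichotomy for nef classes on irreducible hyperk\"ahler manifolds together with the Iitaka fibration of the divisor $D$, rather than to manufacture a Lagrangian fibration of the ambient $X$.
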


By the Bogomolov decomposition theorem, up to a finite \'etale covering, any compact K\"ahler 
symplectic manifold is a product of a torus and several irreducible holomorphic symplectic
manifolds. Since our assumptions on $D$ and $\cF$ are preserved under finite \'etale coverings,
theorem \ref{irreducibleHK} is valid for holomorphic symplectic manifolds with $h^{2,0}=1$ and finite 
fundamental group. Moreover we may consider only the case of such products in the sequel.

\begin{rem} The smoothness assumption is essential, as one sees by considering the Hilbert square $X$ of an elliptic $K3$-surface $g:S\to \Bbb P^1$: one has a fibration $h:X\to \Bbb P^2=Sym^2(\Bbb P^1)$. If $C\subset \Bbb P^2$ is the ramification conic of the natural 
$2$-cyclic cover $(\Bbb P^1)^2\to \Bbb P^2$, and $L\subset \Bbb P^2$ is a line tangent to $C$, then the characteristic foliation on 
the singular divisor $D:=h^{-1}(L)$ is algebraic with $g=1$. One obtains  
similar examples with $g>1$ by considering the image of $C\times S$ in the Hilbert square
of $S$, where $S$ is an arbitrary $K3$ surface and $C\subset S$ is a curve. 
\end{rem}

\begin{theorem}\label{classification} Let $X$, $D$, $\cF$ be as in theorem \ref{isotriviality}. 
Suppose that $D$ is 
non-uniruled and $\cF$ is algebraic. Then, possibly after a finite \'etale
covering, $X =S\times Y$, where $dim(S)=2$, both $S$ and $Y$ are complex 
projective manifolds 
carrying holomorphic
symplectic forms $\sigma_S$, $\sigma_Y$, and $D = C\times Y$, where
$C\subset S$ is a curve. 
\end{theorem}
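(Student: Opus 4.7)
By the Bogomolov--Beauville decomposition, after a finite \'etale cover (which preserves all hypotheses) we may assume $X = T \times X_1 \times \cdots \times X_k$, with $T$ an abelian variety and each $X_i$ an irreducible holomorphic symplectic manifold of dimension $\geq 2$; the symplectic form reads $\sigma = \sigma_T + \sum_j \pi_j^\ast \sigma_j$ with $\pi_j$ the projections. Since $D$ is non-uniruled, the general leaf of $\cF$ has genus $g \geq 1$ (rational leaves would force $D$ uniruled), and Theorem~\ref{isotriviality} applies and yields that $f: D \to B$ is isotrivial with $K_B$ trivial. The plan is to argue by induction on $\dim X$, the base case $\dim X = 2$ being immediate.

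Writing a characteristic vector as $\xi = \sum_j \xi_j$ in the product decomposition, the condition $\sigma(\xi,\cdot) \in N^*_{D/X}$ combined with the block form of $\sigma$ shows that $\xi_j \equiv 0$ iff the conormal line bundle of $D$ has no $T^*_{X_j}$-component, which in turn is equivalent to $D$ being of the form $X_j \times D_j$ under the factorization $X = X_j \times (X/X_j)$. Equivalently: $\pi_j|_D$ is constant on every leaf iff $D$ splits off the factor $X_j$. Whenever this happens for some $j$, one checks that the characteristic foliation of $D_j \subset X/X_j$ pulls back to $\cF$ on $D = X_j \times D_j$ and inherits all the hypotheses; applying the inductive hypothesis to $D_j \subset X/X_j$ yields $X/X_j = S \times Y'$ with $D_j = C \times Y'$, and setting $Y := X_j \times Y'$ gives the required decomposition $X = S \times Y$, $D = C \times Y$.

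It thus suffices to rule out the case $\dim X > 2$ in which $D$ splits off no factor. If $X$ has a single irreducible HK factor of dimension $\geq 4$, Theorem~\ref{irreducibleHK} yields an immediate contradiction. The principal obstacle is the case of several factors: for each $j$ the subbundle $V_j := TX_j|_D \cap TD \subset TD$ has rank $\dim X_j - 1$, the $V_j$'s pairwise commute (inherited from the product structure of $X$), and together with $T\cF$ they span $TD$, producing a forced near-product structure on $D$. To eliminate this case one combines that structure with the triviality of $K_B$ and the isotriviality of $f$: the leaves, projecting non-constantly to every factor, would give an isotrivial family of pairwise isomorphic curves in each factor, while $K_B = 0$ rigidly constrains the base of the foliation; these constraints together with the indecomposability of $D$ cannot coexist unless a single $2$-dimensional factor is present, which is then the symplectic surface $S$ of the conclusion (a K3 or an abelian surface), and $C \subset S$ is the common projected curve.
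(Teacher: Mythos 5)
Your overall strategy (Bogomolov decomposition, block form of $\sigma$, splitting off factors $X_j$ with $\mathcal{F}$ tangent to the complementary directions, induction) reproduces the easy half of the paper's argument, and the equivalence ``$\xi_j\equiv 0$ iff $D=X_j\times D_j$'' is essentially correct. But the two places where the real work happens are missing. First, when $D$ splits off no factor and the remaining factors are not a single irreducible hyperk\"ahler manifold, your final paragraph is an assertion, not a proof. Even after one knows (via slicing $D_y=D\cap(H\times\{y\})$, which itself needs the generic smoothness of $D_y$ and the fact that leaves of $\mathcal{F}$ project \'etale onto leaves of the characteristic foliation of $D_y$ --- an argument you do not give, and which is what makes Theorem \ref{irreducibleHK} applicable when a factor has dimension $\geq 4$) that every irreducible factor is a K3 surface $H$, one still has to show that $p_H(D)$ is a curve rather than all of $H$. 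This is exactly where the paper uses isotriviality together with the rigidity lemma: the leaves through a fixed point $h\in H$ form a compact family of isomorphic curves all passing through $h$, so their projections coincide on connected components, there are finitely many projected curves through each point, they do not meet their small deformations, hence $p_H(D)=H$ would give a smooth elliptic fibration on a K3, contradicting the Euler number. Saying ``these constraints cannot coexist'' does not substitute for this.

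Second, and more fundamentally, your framework cannot handle the case where $X$ is a torus (or where the surface $S$ must be extracted from a torus factor of dimension $\geq 4$). There the surface $S$ of the conclusion is \emph{not} a factor of the Bogomolov decomposition, so no amount of ``splitting off $X_j$'' produces it; indeed, by Remark \ref{notsum} the resulting splitting $T=S'\times K$ is in general not $\sigma$-orthogonal, so no block-form argument on $\sigma$ can detect it. The paper instead uses Theorem \ref{isotriviality} to compute $\kappa(D)\in\{0,1\}$, then Ueno's structure theorem for subvarieties of tori to write $D=p^{-1}(C)$ for a projection $p:T\to S$ onto an abelian surface, then (for $g>1$) shows that the $2$-dimensional linear foliation $\sigma$-orthogonal to $\ker p$ has compact leaves via a translation argument, and finally invokes Poincar\'e reducibility to split $T$ after an isogeny; for $g=1$ one argues separately with $D$ a subtorus. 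None of this is present in your proposal, so the torus case is a genuine gap rather than a detail.
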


\begin{rem}\label{form}

The surface $S$ from theorem \ref{classification} is, up to a finite cover, either $K3$ or abelian. 
In the first case, $\sigma =p^*\sigma_S\oplus q^*\sigma_Y$ on $TX\cong p^*TS\oplus q^*TY$ (where
$p$, $q$ denote the projections) by K\"unneth formula.
 
In the second case, one still has $\sigma =p^*\sigma_S\oplus q^*\sigma_Y$ when $g>1$. Indeed, by 
K\"unneth formula (and Bogomolov decomposition) one reduces to the case when $Y$ is also an abelian variety, 
and the decomposition then 
follows by a straightforward linear-algebraic computation.
 
In contrast to these cases, when $S$ is an abelian surface and $g=1$, $\sigma$
is not always a direct sum  (see example \ref{notsum}).
\end{rem}

The main ideas of the proof of Theorem
\ref{isotriviality-form} are as follows. Suppose that $D$ is not uniruled
and that $\cF$ is algebraic. Then $\cF$ defines a holomorphic fibration
$f: D\to B$ such that its non-singular fibers are curves of genus $g>0$,
and the singular fibers are multiple curves with smooth reduction. The base has only quotient singularities
by Reeb stability. We prove that the codimension
of the locus of multiple fibers in $D$ (and of its image in $B$) is at least two. Therefore
the form $\omega$ descends to $B$ outside of a codimension-two locus; this trivializes the canonical class
of $B$.

The generic semi-positivity theorem of \cite{CP13}, in the simpler case when there is no orbifold structure, 
now implies that the Iitaka dimension of the determinant of any 
subsheaf of the cotangent sheaf of $B$ is non-positive. On the other
hand, Hwang and Viehweg construct such a subsheaf (coming from the Kodaira-Spencer map) 
with Iitaka dimension equal to the number of 
moduli
of the fibres of $f$. Therefore the family $f$ must be isotrivial.

As an application, we deduce in section 
\ref{lagrang}
a certain case of the Lagrangian conjecture 
on a projective (and, more generally, compact K\"ahler) 
irreducible holomorphic symplectic manifold of dimension $2n$
from the Abundance conjecture in dimension $2n-1$. We therefore solve this case 
unconditionally for $n=2$, since the Abundance conjecture is known for 
threefolds (\cite{K'}; see also \cite{CHP} for the generalization to the K\"ahler case). This was our initial 
motivation for this research. When the 
research has 
been completed,
Chenyang Xu has informed us that for projective manifolds, this case of the Lagrangian 
conjecture
follows from a fundamental result of Demailly, Hacon and Paun 
(\cite{DHP}). As no algebraic proof of \cite{DHP} is known, our result also gives a 
simple algebro-geometric alternative for hyperk\"ahler manifolds (see section 
\ref{lagrang} for statements and proofs). 

The next section is devoted to the proof of theorems \ref{isotriviality} and \ref{isotriviality-form}.
The following two prove theorems \ref{irreducibleHK} and \ref{classification} respectively.
In the last section, we treat our application to the Lagrangian conjecture.


\section{Some numerical invariants of the characteristic foliation}

\subsection{Smooth rank $1$ foliations} \label{sf}

Let $D$ be a $d$-dimensional ($d\geq 2$) connected K\"ahler manifold carrying a non-singular holomorphic foliation $\cF$ of rank $1$.
The foliation $\cF$ is called algebraic when all its leaves are compact complex curves. 
A non-singular algebraic foliation induces a proper holomorphic map $f:D\to {\cC}(D)$ to a component ${\cC}(D)$
of the cycle space of $D$. Indeed,
the general leaves of $\cF$ are smooth curves varying in a dominating family of cycles on $D$; by compactness of ${\cC}(D)$, 
one has well-defined limit cycles which must be supported on the special leaves, and the
multiplicity of such a cycle is uniquely determined by pairing with the K\"ahler class. Taking the normalization of the image 
if necessary, we obtain a proper holomorphic map  $f:D\to B$ onto a $(d-1)$-dimensional normal base $B$. 
It is well-known that in such a situation,
the holonomy groups of the leaves are finite (this amounts to the boundedness of the volume of the leaves which holds in the K\"ahler case,
see for example \cite{E}). Therefore by Reeb stability (see \cite{MM}, or else \cite{HV} which develops the construction
of \cite{MM} in the holomorphic case in some detail), locally in some saturated neighbourhood of each leaf $C$ of holonomy group $G_C$, 
our foliation is the quotient 
of $T\times \tilde{C}$,
where $T$ is a local transverse and $\tilde{C}$ is $G_C$-covering of $C$,  
by the natural action of $G_C$.

In particular, $B$ has only quotient singularities and so is $\Q$-factorial, and $f$ is ``uni-smooth'', that is, the reduction of any of its fibres is a 
smooth projective curve.

Let $g$ denote the genus of a non-singular fiber 
of $f$. If $g=0$, the holonomy groups are trivial and all fibres of $f$ are smooth reduced rational curves, $B$ is smooth, 
$f$ submersive.
If $g>0$, $f$ may have multiple fibres, of genus one when $g=1$ and of genus
greater than one (but possibly smaller than $g$) when $g>1$.

If $g=1$ and $B$ is compact, it is well-known that $f$ must be {\bf isotrivial}: indeed, the $j$-function then holomorphically maps $B$ to $\Bbb C$. In fact the holomorphicity of $j$ near the multiple fibres is easily checked: from Reeb stability we obtain the local boundedness of $j$, and then use the normality of $B$.

A pair $D,\cF$ as above arises, for example, when $D$ is a smooth connected divisor in a 
$2n$-dimensional projective (or compact K\"ahler) manifold $X$ carrying a holomorphic symplectic 
$2$-form $\sigma$. The foliation $\cF$ is then given, at each $x\in D$, as the $\sigma$-orthogonal to $TD_x$ at $x$. In this case $d=2n-1$. In general, $\cF$ will not be algebraic. One particular case when $\cF$ is algebraic
is that of a uniruled $D$: the leaves of $\cF$ are then precisely the fibres 
of the rational quotient fibration of $D$ (see for instance \cite{AV}, 
section 4), so $g=0$.

We will elucidate below the situation when $\cF$ 
is algebraic and $g>0$. 

Note that in this example, the quotient bundle $T_D/\cF$ carries a symplectic form, so it has trivial 
determinant.
Therefore the line bundle $\cF$ is isomorphic to the anticanonical bundle of $D$ (by adjunction,
this is ${\cO}_D(-D)$).

The purpose of this section is to prove the following result, which is stated as theorem 
\ref{isotriviality} in the introduction.

\begin{theorem}\label{tmr} Let $D$ be a smooth divisor in a projective holomorphic symplectic variety $(X, \sigma)$, and $\cF$ the foliation on $D$ given by the kernel of $\sigma|_D$. If $D$ is non-uniruled and $\cF$ is algebraic, then
the corresponding fibration $f:D\to B$ is isotrivial, $K_D$ is nef and abundant, $\nu(K_D)=\kappa(D)=1$ if $g\geq 2$, and $\nu(K_D)=\kappa(D)=0$ if $g=1$. 
\end{theorem}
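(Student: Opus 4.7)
The plan is to derive Theorem \ref{tmr} from the more general Theorem \ref{isotriviality-form} by matching the symplectic setup to its hypotheses, and then to read off the numerical invariants of $K_D$ from the resulting isotriviality via the canonical-bundle identification provided by $\sigma$.

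First, I would set $d = 2n-1$ and define $\omega := \sigma^{\wedge(n-1)}|_D$. Since $\sigma$ is non-degenerate on $X$, a pointwise linear-algebra check shows that $\omega$ is a nowhere vanishing holomorphic $(d-1)$-form on $D$ and that at every point the kernel of $\omega$ agrees with the $\sigma$-orthogonal of $T_D$, that is, with $\cF$. Consequently the hypotheses of Theorem \ref{isotriviality-form} are satisfied, and we immediately obtain that $f:D\to B$ is isotrivial, has no multiple fibres in codimension one, and $K_B$ is trivial.

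Next, I would identify $K_D$ in terms of $\cF$. The exact sequence $0 \to \cF \to T_D \to T_D/\cF \to 0$, together with the fact that $\sigma$ induces a fibrewise symplectic form on the quotient $T_D/\cF$, gives $\det(T_D/\cF)=\cO_D$ and hence $\cF \cong -K_D$ (equivalently, $\cF \cong \cO_D(-D)$ by adjunction). Since on every smooth fibre $F$ of $f$ one has $\cF|_F = T_F$, this yields $K_D|_F = K_F$; in particular $K_D\cdot F = 2g-2$.

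To finish, I would use $K_D = K_{D/B} + f^*K_B \equiv_{\Q} K_{D/B}$, together with isotriviality and the absence of multiple fibres in codimension one, to reduce to a trivial family. More precisely, these two properties produce a finite cover $\pi:\tilde D\to D$ which is étale in codimension one and along which the family becomes a product $\tilde D = F\times \tilde B$ with $K_{\tilde B}$ torsion; hence $K_{\tilde D} \equiv_{\Q} \mathrm{pr}_F^*K_F$. When $g=1$ this is numerically trivial, so $\nu(K_D)=\kappa(D)=0$; when $g\geq 2$ it is semi-ample of Iitaka dimension one, so $\nu(K_D)=\kappa(D)=1$. Either way $K_D$ is nef and abundant, as asserted.

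The main obstacle is bookkeeping in the last step: one must check that the cover trivialising the isotrivial family can be chosen étale in codimension one (so that $\pi^*K_D=K_{\tilde D}$ and Iitaka/numerical dimensions are preserved). This is exactly where the conclusion of Theorem \ref{isotriviality-form} that the multiple-fibre locus has codimension at least two, combined with the quotient-singularity nature of $B$ and the triviality of $K_B$, is crucial.
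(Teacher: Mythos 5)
Your reduction to Theorem \ref{isotriviality-form} via $\omega=\sigma^{\wedge(n-1)}|_D$ and the identification $\cF\cong\cO_D(-D)$, $K_D|_F=K_F$, is exactly the paper's first step and is fine. The genuine gap is in the last step, which you flag but dismiss as bookkeeping: the claim that there is a \emph{finite} cover $\pi:\tilde D\to D$, \'etale in codimension one, over which the family becomes a global product $F\times\tilde B$ (with the analogous statement when $g=1$). Isotriviality only gives you a finite \'etale cover of the open locus $B^0$ over which $f$ is a smooth family (its complement has codimension $\geq 2$) trivialising the family \emph{there}. Extending that cover to all of $B$ or $D$ by normalisation does produce a cover \'etale in codimension one, but then the product structure, and hence the identity $K_{\tilde D}\equiv_{\Q}\mathrm{pr}_F^*K_F$ with a globally defined projection, is known only outside a closed subset of codimension two. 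That suffices for $\kappa$ (sections of multiples extend across codimension-two sets on a normal variety), but it does not suffice for nefness of $K_{\tilde D}$ nor for computing $\nu$: neither is a codimension-one condition, and multiple fibres genuinely can occur over the codimension-two locus (Remark \ref{example-mf}), so $\tilde D$ may be singular there and you have no control of $K_{\tilde D}$ on curves lying over that locus. (Separately, for $g=1$ no finite cover need turn an isotrivial elliptic fibration into a product, only into a principal bundle; this is easily repaired since you only use numerical triviality of the relative canonical class, but as written the claim is too strong.)

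The paper closes exactly this hole by a different mechanism: instead of a finite cover, it base-changes by $f$ itself, proving via Reeb stability that the normalised fibre product $f_D:(D\times_BD)^{\nu}\to D$ is a \emph{smooth} family over the smooth base $B'=D$ (Lemma \ref{lisok}); a smooth isotrivial family over a smooth base is a fibre bundle with structure group $\Aut(F)$, hence becomes a product ($g\geq 2$) or a principal bundle ($g=1$) after a further finite base change, globally (Lemma \ref{numdim}). The composed map $D''\to D$ is then not finite, only surjective, but nefness, $\kappa$ and $\nu$ are preserved under pullback by surjective morphisms, and $K_{D''/B''}$ is identified with the pullback of $K_D$ using $K_B\equiv 0$ and the fact that normalisation only modifies a codimension-two locus (Corollary \ref{liso}). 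If you wish to keep your finite-cover route, you must prove that your extended cover is a fibre bundle everywhere (not just in codimension one); that is precisely the nontrivial input that Reeb stability provides in the paper's argument, and without it the assertions ``$K_D$ nef'' and ``$\nu(K_D)=1$ (resp.\ $0$)'' are not established.
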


This shall be a consequence of a more general isotriviality result stated as theorem \ref{isotriviality-form}:

\begin{theorem}\label{tmr-form} Let $D$ be a complex projective manifold of dimension $d$ carrying a nowhere vanishing holomorphic
$(d-1)$-form $\omega$. Let $\cF$ be the foliation defined as the kernel of $\omega$. Suppose $\cF$ is algebraic. Then the corresponding
fibration $f:D\to B$ is isotrivial and submersive in codimension two, and the canonical class of $B$ is trivial. 
\end{theorem}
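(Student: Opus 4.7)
\emph{The plan is to} extract everything from a local analysis of the holonomy. By subsection \ref{sf}, $\cF$ being algebraic gives a proper fibration $f\colon D\to B$ whose local picture around each leaf $C$ is Reeb-stable: the fibration is modeled on $(T\times\tilde C)/G_C\to T/G_C$, with $T\cong\bC^{d-1}$ a transverse and $G_C$ acting linearly on $T$ and by deck transformations on the $G_C$-cover $\tilde C$ of $C$. The key step, from which both the codim-$2$ submersion and the triviality of $K_B$ will follow, is to prove $G_C\subseteq SL(T)$.

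\emph{For this,} observe that $D$ is compact K\"ahler, so $\omega$ is $d$-closed, and Cartan's formula gives $L_V\omega=0$ for every $V$ tangent to $\cF$. Pulling $\omega$ back to $T\times\tilde C$ and using the kernel condition, the pullback takes the local form $\tilde\omega=c(t)\,dt_1\wedge\cdots\wedge dt_{d-1}$, with $c$ independent of the $\tilde C$-coordinate (by $L_V\omega=0$) and nowhere vanishing. The $G_C$-invariance of $\tilde\omega$ then reads $c(A_g t)\det(A_g)=c(t)$, which evaluated at the origin $t=0$ (fixed by the linear $G_C$-action) yields $\det(A_g)=1$ for all $g\in G_C$.

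\emph{From $G_C\subseteq SL(T)$,} no nontrivial $g\in G_C$ is a pseudo-reflection, so its fixed locus in $T$ has codimension $\ge 2$; hence both the multiple-fiber locus in $B$ and the singular locus of $B$ are contained in a codim-$\ge 2$ closed subset. This is the codim-$2$ submersion claim. Over the complement $B^0\subset B$, the map $f$ is smooth and $B^0$ is smooth, so $\omega$ descends to a nowhere-vanishing $(d-1)$-form $\eta$ on $B^0$. Since $B\setminus B^0$ has codimension $\ge 2$, the form $\eta$ extends uniquely across it to a trivialization of the reflexive canonical sheaf of $B$, and thus $K_B=0$ as a Weil divisor class.

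\emph{For isotriviality,} I would combine two inputs on $B$ (passing to a resolution if needed). Once $K_B\equiv 0$, the generic semi-positivity theorem of \cite{CP13}, applied in its non-orbifold version valid here, yields $\kappa(\det\cG)\le 0$ for every coherent subsheaf $\cG$ of the reflexive cotangent sheaf of $B$. On the other hand, a Hwang--Viehweg-style Kodaira--Spencer construction produces such a subsheaf whose determinant has Iitaka dimension equal to $\mathrm{Var}(f)$. Together these force $\mathrm{Var}(f)=0$, and $f$ is isotrivial. The principal technical hurdle I anticipate is reconciling both sheaf-theoretic inputs with the quotient singularities of $B$: one either works on a resolution $\mu\colon\tilde B\to B$ (using that $G_C\subset SL$ makes the singularities of $B$ canonical, so exceptional divisors are pseudo-effective) or adapts the constructions to the orbifold setting. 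The cases $g=0$ and $g=1$ allow for a more direct bypass of the Kodaira--Spencer argument, as recalled in subsection \ref{sf}.
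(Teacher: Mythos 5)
Your proposal is correct, and its overall skeleton is the paper's: first show that $f$ has no multiple fibres in codimension one and that $K_B$ is trivial, then play the generic semi-positivity theorem of \cite{CP13} in its non-orbifold form (Lemma \ref{gsp}, Theorem \ref{odiff}) against the Hwang--Viehweg Kodaira--Spencer subsheaf (Proposition \ref{hv}) to force $Var(f)=0$, the cases $g\leq 1$ being elementary. Where you genuinely diverge is in the first half: the paper's Lemma \ref{nodelta} descends $\omega$ to a form $\alpha$ on $B-E$ and excludes multiplicity $m>1$ by a Laurent-expansion growth estimate $\vert G(u,u')\vert\leq C\vert u\vert^{-(1-1/m)}$, whereas you lift $\omega$ to the Reeb-stability model $T\times\tilde C$, write it as $c(t)\,dt_1\wedge\cdots\wedge dt_{d-1}$ with $c$ constant along the leaves, and read off $\det(A_g)=1$ at the fixed point, i.e. $G_C\subset SL(T)$ for every leaf. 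This is precisely the alternative the authors only sketch in Remark \ref{byreeb}, but you push it a bit further: it rules out pseudo-reflections at all leaves at once (not merely at a general point of $\Delta$), and it yields the extra information that the quotient singularities of $B$ are canonical, which is what makes your proposed resolution-based variant of the semi-positivity step viable ($K_{\tilde B}=\mu^*K_B+E$ with $E\geq 0$ exceptional, so Miyaoka-type semi-positivity on the smooth model applies); the paper instead avoids any resolution by working with the reflexive sheaf $j_*\Omega^1_{B^{sm}}$ and general complete-intersection curves missing $Sing(B)$. Two small points to tighten: justify the linearization of the finite holonomy action (Cartan's lemma) before writing $A_g\in GL(T)$, and note, as the paper does, that the Kodaira--Spencer construction requires passing to a finite cover of $B^0$, \'etale over $B^{sm}$ (level-$N$ structure), so the bound $\kappa(\det)\leq 0$ must be available for such covers as well.
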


Our first idea is to introduce the {\it orbifold base} of a fibration and to show that the orbifold structure is actually trivial
when the fibration is defined by a non-vanishing holomorphic $(d-1)$-form.

\subsection{Orbifold base}\label{ob}

Let $f:D\to B$ be a uni-smooth fibration in curves, with $D$ smooth K\"ahler and $B$ $\Q$-factorial.
We define (as in \cite{Ca04}, in a much more general situation there) 
the {\it orbifold base} $(B,\Delta)$ for $f$ as follows: for each irreducible reduced Weil ($\Q$-Cartier) divisor $E\subset B$, set $E'=f^{-1}(E)$. This is an irreducible divisor, 
and $f^*(E)=m_f(E)E'$ for some positive integer $m_f(E)$. This integer is equal to $1$ for all but finitely many $E$. Set $\Delta=\sum_{E\subset B} (1-\frac{1}{m_f(E)})E$. The divisor $\Delta$ thus carries the information about the multiple fibers of $f$ in codimension one,
but the coefficients of $\Delta$ are ``orbifold multiplicities'' varying between zero and one rather than the multiplicities of the 
fibers. Over a neighbourhood of a general point $b\in \Delta$ - that is, a point outside of $Sing(B)$ and $Sing(\Delta)$ -
the map $f$ is locally given by $(z_1, \dots, z_{d-1}, w)\mapsto (z_1^m, \dots z_{d-1})=(u_1,\dots, u_{d-1})$, where $m=m_f(E)$
for the component $E$ of $\Delta$ which contains $b$.

\begin{lemma}\label{nodelta} Suppose that $f$ is given by the kernel of a non-vanishing holomorphic $(d-1)$-form $\omega$. Then $f$ has no 
multiple fibers in codimension one, that is, $\Delta=0$, and $K_B$ is trivial.
\end{lemma}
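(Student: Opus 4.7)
My plan has two phases: establishing $\Delta=0$, and then deducing triviality of $K_B$ by descent.

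For the first phase I argue by contradiction. Suppose some component $E\subset\Delta$ has multiplicity $m>1$, and pick a general $b\in E$ with reduced leaf $C$. By Reeb stability, a saturated neighborhood of $C$ in $D$ is modeled as $(T\times\tilde C)/G_C$, where the holonomy $G_C\cong\bZ/m$ acts linearly on the transverse $T\cong\bC^{d-1}$ with weight $1$ on one coordinate $t_1$ (trivially on the others) and acts freely by deck transformations on the connected cyclic cover $\tilde C\to C$ of degree $m$. Since $\ker\omega$ pulls back to the leaf direction $\partial/\partial\tilde w$, one has $\pi^*\omega=F\cdot dt_1\wedge\cdots\wedge dt_{d-1}$ for some nowhere-vanishing holomorphic function $F$ on $T\times\tilde C$. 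Imposing $G_C$-invariance of $\pi^*\omega$, together with $\sigma^*dt_1=\zeta\,dt_1$ for $\zeta=e^{2\pi i/m}$, forces $F$ to transform with weight $-1$:
\[F(\zeta t_1,t_2,\ldots,\sigma\tilde w)=\zeta^{-1}F(t_1,t_2,\ldots,\tilde w).\]

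Next I restrict to $t_1=0$ and a generic choice $(c_2,\ldots,c_{d-1})$ of the other transverse coordinates. The result is a nowhere-vanishing holomorphic function $\phi(\tilde w):=F(0,c_2,\ldots,c_{d-1},\tilde w)$ on the compact curve $\tilde C$ satisfying $\phi(\sigma\tilde w)=\zeta^{-1}\phi(\tilde w)$. By the standard correspondence between equivariant functions on a Galois cover and sections of associated line bundles, such a $\phi$ is the same as a nowhere-vanishing global section of the line bundle $M^{-1}$ on $C$, where $M\in\Pic^0(C)$ is the $m$-torsion class classifying the connected cyclic cover $\tilde C\to C$. Since the cover is connected of degree $m$, $M$ has order exactly $m$ and so is nontrivial for $m>1$; but a nontrivial line bundle on a smooth compact curve has no nowhere-vanishing holomorphic section. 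Contradiction, so $\Delta=0$.

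With $\Delta=0$, the map $f$ is submersive in codimension one, and so the smooth locus $D^\circ$ of $f$ has complement of codimension at least two in $D$, with image in $B$ of complement of codimension at least two as well. On $D^\circ$ one has $T_D/\cF\cong f^*T_B$; combined with the isomorphism $\cF\cong K_D^{-1}$ induced by $\omega$ (via $\Omega^{d-1}_D\cong T_D\otimes K_D$), taking determinants yields $f^*K_B\cong\OO_{D^\circ}$. Since $f_*\OO_D=\OO_B$, pullback is injective on line-bundle classes, so $K_B$ is trivial on $f(D^\circ)$; by normality of $B$ this extends to triviality of $K_B$ on all of $B$.

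I expect the main obstacle to lie in the contradiction step: correctly tracking the weight of $F$ on the Reeb cover and recognizing that a nowhere-vanishing equivariant function forces the associated $m$-torsion line bundle on $C$ to be trivial. Once that is in place, the descent argument for $K_B$ is essentially formal.
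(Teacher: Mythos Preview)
Your proof is correct. It follows the Reeb-stability route that the paper itself sketches as an alternative in Remark~\ref{byreeb}, rather than the paper's main argument, which is analytic: there the form $\omega$ is shown to descend to a form $\alpha$ on $B\setminus E$, a local multisection computation gives the growth estimate $|G(u,u')|\sim |u|^{-(1-1/m)}$ for the coefficient of $\alpha$, and a Laurent-series bound forces holomorphic extension across $E$, hence $m=1$. Your approach replaces this extension argument by the equivariance constraint on the \'etale cover $T\times\tilde C$, which is cleaner and avoids any analytic estimate; the price is that one must set up the Reeb model carefully (in particular, the freeness of the $G_C$-action on $\tilde C$ is what makes $\pi$ \'etale and hence $F$ nowhere-vanishing). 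For the triviality of $K_B$, both arguments amount to the same descent of $\omega$ outside codimension two.

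One small simplification: once you have the nowhere-vanishing holomorphic function $\phi$ on the compact connected curve $\tilde C$, it is automatically constant, and then $\phi=\zeta^{-1}\phi$ gives the contradiction directly. The detour through the torsion line bundle $M$ is correct but unnecessary; in fact a nontrivial degree-zero line bundle on a compact curve has no nonzero sections at all, so even the existence of $\phi$ (not just its nonvanishing) already yields the contradiction.
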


\begin{proof} The question on multiple fibers is local on $B$, and $B$ is smooth in codimension one. 
We can thus assume that $B$ is a polydisc in $\Bbb C^{d-1}$, with coordinates 
$(u_1,\dots, u_{d-1})=(u,u')$ ($u$ being the first coordinate and $u'$ the 
$(d-2)$-tuple of others), and that $f$ has multiple fibres of multiplicity $m>1$ over the divisor $E$ defined by the equation 
$u=0$. Since the form $\omega$ is d-closed, and its kernel is $Ker(df)^{sat}$, the saturation being taken in $TD$, 
it descends over $B-E$ to a holomorphic $(d-1)$-form $\alpha$ on $B-E$ such that $\omega=f^*(\alpha)$ on $f^{-1}(B-E)$
(see e.g. \cite{S}, lemma 6, where the full argument is given for holomorphic symplectic forms; it immediately generalizes
to our setting).

We are going to show that $\alpha$ extends holomorphically to $B$, and that $m=1$. Write 
$\alpha=G(u, u')du\wedge du'$ (where $du'$ stands for the wedge product of $du_i$ for $i>1$). We claim that $\vert G(u,u')\vert =e^g.\vert u\vert^{-c}$, with $c=1-\frac{1}{m}$, where $g$ is a real-valued bounded function, after possibly shrinking $B$ near $(0,0)$.

Let, indeed, $B'\subset D$ be a smooth local multisection of degree $m$ over $B$ meeting transversally the reduction of the fibre of $f$ over $(0,0)\in B$. We can choose the coordinates $(z_1,z_2,\dots z_{d-1},w)=(z, z', w)$ on $D$ near the intersection point $(0,0,0)$ of $B'$ and the fibre $D_{(0,0)}$ of $f$ over $(0,0)$ in such a way that $f(z,z',w)=(z^m,z')$, and $B'$ is defined by the equation $y=0$. Restricting $\omega$ to $B'$, we see that $f^*(\alpha)=G(z^m,z').m.z^{m-1}.dz\wedge dz'=\omega|_{B'}=h(z,z')dz\wedge dz'$, for some nowhere vanishing function $h(z,z')=H(z^m,z')=H(u,u')$, whenever $u=z^m\neq 0$.

Thus $\vert G(u,u')\vert=\vert G(z^m,z')\vert=\frac{\vert H(u,u')\vert}{m}.\frac{1}{\vert u\vert ^c}=e^{g(u,u')}.\frac{1}{\vert u\vert^c}$.

The following well-known fact now shows that $\alpha$ extends holomorphically to $B$, and hence $c$ must be zero
and $m=1$ as claimed. 

Let $G(u,u')$ be a holomorphic function defined on $B-E$, where $B$ is a polydisc centered at $(0,0)$ in $\Bbb C^{n-1}$, and $E$ is the divisor defined by $u=0$ in $B$. Assume that, for some $\varepsilon >0$, $\vert G(u,u')\vert \leq C.\vert u\vert ^{-(1-\varepsilon)}$ for some positive constant $C$ independent on $u'$. Then $G(u,u')$ extends holomorphically across the divisor $u=0$.

Indeed, fix $u'$, the Laurent expansion $G(u,u')=\sum_{k=-\infty}^{k=+\infty} a_k(u').u^k$ of $G$ has then coefficients $a_k(u')r^k=\frac{1}{2\pi}\int_{0}^{2\pi}e^{-ikt}G(re^{it},u')dt$ (cf. Henri Cartan, Th\'eorie \'el\'ementaire des fonctions analytiques, Hermann 1961, p. 86, formula (2.1)). The bound on $\vert G\vert$ implies that $\vert a_k(u')\vert \leq Cr^{-k-1+\varepsilon}$ for $0<r<<1$. This implies that $a_k(u')=0$ if $k<0$, by letting $r\to 0^+$.

It remains to show that the Weil ($\Q$-Cartier) divisor $K_B$ is trivial. Indeed the form $\omega$ descends to a non-vanishing holomorphic
form on the complement of a codimension-two subset of $B$. Hence the triviality of $K_B$.   
\end{proof}

\begin{rem}\label{byreeb} Another way to see this is by Reeb stability. Indeed in a neighbourhood of a multiple fiber $C$ over a general
point of $\Delta$, which has cyclic holonomy of order $m$,
$\omega$ must lift as a $G_C$-invariant form to the $G_C$-covering coming from Reeb stability, but this is impossible by the explicit
local computation.
\end{rem}

\begin{rem}\label{example-mf} The map
$f:D\to B$ given by a global nonvanishing $(d-1)$-form may have multiple fibers
in codimension two: take for instance $D=(E\times E\times C)/G$ where $E$ is an
elliptic curve, $C$ is a curve equipped with a fixed-point-free involution
and $G$ a group of order two where the non-trivial element acts as $-Id$ on
$E\times E$ and as that involution on $C$. Then the projection onto
the quotient of $E\times E$ by $-Id$ has isolated multiple fibers, and is given
by the kernel of a $2$-form which is the exterior product of $1$-forms on $E$.
\end{rem}

\subsection{Isotriviality of the fibration}\label{cdm}

As we have already remarked, the isotriviality of the family of curves $f:D\to B$ associated to $\cF$ is clear when $g=0$ or
$g=1$, so we assume in this section that $g\geq 2$. All varieties are assumed to be projective (or quasiprojective,
when we work outside of a suitable codimension two subset such as $Sing(B)$).
Define the sheaf $\Omega^1_B$ as the direct image $j_*\Omega^1_{B^{sm}}$ where $j:B^{sm}\to B$ is
the embedding of the smooth part of $B$ in $B$. The following theorem is a direct consequence of the strenghtening of Miyaoka's generic
semi-positivity theorem (\cite{Mi}, see \cite{PM} p. 66-67, Theorem 2.14, 2.15 for a formulation
adapted to our purposes) given in lemma \ref{gsp} below.


\begin{theorem}\label{odiff} Let $B$ be a normal projective variety with log-canonical singularities such that $K_B\equiv 0$. 
Let $L$ be a coherent rank-one subsheaf of $(\Omega^1_B)^{\otimes k}$ for some $k>0$. Then $deg_{C}(L_{\vert C})\leq 0$ for a sufficiently general complete intersection curve $C$ cut out on $B$ by members of a linear system $|lH|$, $l>>0$, where $H$ is an ample 
line bundle on $B$. 

In particular, 
for any integer $m>0$ one has: $h^0(B^{sm},L^{\otimes m})\leq 1$ and so $\kappa(B^{sm},det(\cF))\leq 0$, for any coherent subsheaf $\cF\subset \Omega^1_B$.
\end{theorem}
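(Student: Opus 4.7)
The plan is to treat the first assertion as essentially a direct invocation of the strengthened generic semi-positivity theorem stated in Lemma \ref{gsp}. Since $B$ is log-canonical and $K_B \equiv 0$, hence pseudo-effective, that lemma applies directly to the tensor power $(\Omega^1_B)^{\otimes k}$: for a sufficiently general complete intersection curve $C$ cut out by members of $|lH|$ with $l \gg 0$, the restriction $(\Omega^1_B)^{\otimes k}|_C$ contains no rank-one subsheaf of positive degree. Applied to the subsheaf $L\subset (\Omega^1_B)^{\otimes k}$, this yields $\deg_C(L|_C) \leq 0$.

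For the bound $h^0(B^{sm}, L^{\otimes m}) \leq 1$, I would argue by contradiction. Suppose there exist two linearly independent sections $s_1, s_2 \in H^0(B^{sm}, L^{\otimes m})$ for some $m > 0$, and take a very general complete intersection curve $C$ cut out by members of $|lH|$ with $l \gg 0$. For $l$ large enough, a Bertini-type argument shows that $C$ is smooth and irreducible, sits inside $B^{sm}$ (the singular locus of $B$ has codimension at least two), and is not contained in the zero divisor of any nonzero linear combination $\lambda_1 s_1 + \lambda_2 s_2$. Hence the restriction $H^0(B^{sm}, L^{\otimes m}) \to H^0(C, L^{\otimes m}|_C)$ stays injective on the plane spanned by $s_1$ and $s_2$, giving $h^0(C, L^{\otimes m}|_C) \geq 2$. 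Since $C$ is a smooth projective curve, this forces $\deg_C(L^{\otimes m}|_C) \geq 1$ and hence $\deg_C(L|_C) > 0$, contradicting the first assertion. Thus $h^0(B^{sm}, L^{\otimes m}) \leq 1$ for every $m>0$, which is exactly $\kappa(B^{sm}, L) \leq 0$.

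Finally, to handle $\kappa(B^{sm}, \det(\cF))$ for an arbitrary coherent subsheaf $\cF \subset \Omega^1_B$ of some rank $r$, I would use the characteristic-zero inclusion of $\wedge^r \Omega^1_B$ into $(\Omega^1_B)^{\otimes r}$ as a direct summand (via antisymmetrization), which exhibits $\det(\cF) = \wedge^r \cF$ as a rank-one subsheaf of $(\Omega^1_B)^{\otimes r}$; the previous paragraphs then apply with $k = r$ and $L = \det(\cF)$. The only genuinely substantive ingredient in this proof is the input from Lemma \ref{gsp}; everything else is a standard restriction-to-a-general-curve argument, and the main practical check is only that $l$ may be chosen so that the complete intersection curve has all the general-position properties used above, which is routine Bertini.
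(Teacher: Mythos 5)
Your proposal is correct and follows essentially the same route as the paper: the first assertion is reduced to Lemma \ref{gsp} via restriction to a general complete intersection curve, and your treatment of the ``in particular'' clause (Bertini plus the fact that $h^0\geq 2$ forces positive degree on a smooth curve, together with realizing $\det(\cF)=\wedge^r\cF$ inside $(\Omega^1_B)^{\otimes r}$ by antisymmetrization) is exactly the standard argument the paper leaves implicit. One small precision: Lemma \ref{gsp} bounds slopes of \emph{quotients}, not of subsheaves, so to deduce $\deg_C(L|_C)\leq 0$ you should pass to the quotient $Q=(\Omega^1_B)^{\otimes k}/L$, whose restriction to $C$ has non-negative degree, and combine this with $\deg_C\bigl((\Omega^1_B)^{\otimes k}|_C\bigr)=0$ --- the point where $K_B\equiv 0$ (and not merely pseudo-effectivity) enters --- which is precisely the paper's one-line proof of the first claim.
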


\begin{proof} By lemma \ref{gsp} below, the quotient $Q:=((\Omega^1_B)^{\otimes k}/L)$ restricted to $C$ has non-negative degree. Thus the degree of the (locally free) sheaf $L|_C$ is non-positive, since $deg_C((\Omega^1_B)^{\otimes k})=0$. \end{proof}

The following lemma is a special case of the main result of \cite{CP13} when $\Delta=0$. Its proof is a considerably simplified version of the general case, in particular, no use of 
orbifold differentials is required. We refer to \cite{CP13} for details (see \cite{CP15} for a more general result; the reader may also consult \cite{Cl} which
exposes the main ideas and techniques of both \cite{CP13} and \cite{CP15}).

\begin{lemma}\label{gsp} Let $B$ be an $n$-dimensional, normal, connected, projective variety with log-canonical singularities. Assume that $K_B$ is pseudo-effective. Then, for any $m>0$, any quotient $Q$ of $(\Omega^1_B)^{\otimes m}$ has non-negative slope with respect to any ample polarisation $\alpha:=H^{n-1}$ of $B$.
\end{lemma}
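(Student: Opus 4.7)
The strategy is the classical Bogomolov--McQuillan/Campana--P\u{a}un route: the failure of generic semi-positivity of $(\Omega^1_B)^{\otimes m}$ would produce a subsheaf of $T_B$ of positive slope, and the existence of such a subsheaf forces $B$ to be uniruled, contradicting the pseudo-effectivity of $K_B$.

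I would argue by contradiction. Suppose that for some $m>0$ there exists a torsion-free quotient $Q$ of $(\Omega^1_B)^{\otimes m}$ with $\mu_\alpha(Q)<0$. By the Ramanan--Ramanathan theorem, in characteristic zero semistability is preserved under tensor products, so $\mu_{\min,\alpha}((\Omega^1_B)^{\otimes m})=m\,\mu_{\min,\alpha}(\Omega^1_B)$. Hence $\mu_{\min,\alpha}(\Omega^1_B)<0$, and by reflexive duality on the normal variety $B$ the sheaf $T_B=(\Omega^1_B)^{\vee}$ acquires a saturated subsheaf $\cG$ with $\mu_\alpha(\cG)>0$; passing to the maximal destabilizing subsheaf in the Harder--Narasimhan filtration, I may further assume that $\cG$ is $\alpha$-semistable.

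The heart of the argument is then the Bogomolov--McQuillan theorem in the form proved by Campana--P\u{a}un: any saturated subsheaf $\cG\subset T_B$ which is $\alpha$-semistable of strictly positive slope with respect to a movable class $\alpha$ is automatically closed under the Lie bracket -- the O'Neill tensor $\Lambda^2\cG\to T_B/\cG$ must vanish for slope reasons -- and the resulting foliation is algebraically integrable with rationally connected general leaves. In particular $B$ is uniruled, which by the Boucksom--Demailly--P\u{a}un--Peternell characterization of uniruledness is incompatible with $K_B$ being pseudo-effective, yielding the desired contradiction.

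The main obstacle is this Bogomolov--McQuillan step on the singular base $B$: because $B$ is only normal with log-canonical singularities, the integrability and the rational connectedness of the leaves must be proved on a suitable log-resolution $\pi:\tilde B\to B$, with the slope of $\cG$ tracked through pull-back and push-forward and a careful comparison between $\Omega^1_B$ and $\pi_*\Omega^1_{\tilde B}$. In the present simplified setting (no orbifold structure, $\Delta=0$) this is precisely the content of \cite{CP13}, which avoids the orbifold differentials required for the general statement of \cite{CP15}.
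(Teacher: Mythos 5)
Your reduction to $\mu^{\min}_{\alpha}(\Omega^1_B)\geq 0$ and the production of an $\alpha$-semistable positive-slope foliation $\cG\subset T_B$ agree with the paper's first steps. The genuine gap is in your final contradiction: from "$\cG$ algebraically integrable with rationally connected leaves, hence $B$ uniruled" you conclude via BDPP that $K_B$ cannot be pseudo-effective. That implication is simply false for a normal variety with log-canonical (even klt) singularities: uniruledness of $B$ only forces $K_{\tilde B}$ to be non-pseudo-effective on a resolution $\tilde B$, and since $K_{\tilde B}+\Delta'=\pi^*K_B+E$ with $\Delta'$ a nonzero boundary coming from the non-canonical singularities, no contradiction with the pseudo-effectivity of the Weil divisor $K_B$ follows. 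The paper's own Remark \ref{rmiyvscp} exhibits exactly this phenomenon: the Ueno surface $S=(E\times E)/\ZZ_4$ is rational (so uniruled) with quotient singularities and $K_S$ numerically trivial, hence pseudo-effective; so a statement of the shape "$K_B$ psef and lc singularities $\Rightarrow$ $B$ not uniruled" cannot be the engine of the proof. Your acknowledged "main obstacle" (proving rational connectedness of leaves through a log-resolution) is therefore not the real issue; even granting that step, the argument does not close.

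The paper's proof takes a different route precisely to avoid this. After obtaining the positive-slope foliation $\cF$, it uses \cite{BMQ} only to get algebraicity of the leaves, hence a rational fibration $g:B\dasharrow Z$ with $\cF=Ker(dg)$. Passing to a neat model $g':B'\to Z'$ and writing $K_{B'}+\Delta'=b^*K_B+E$ (here the lc hypothesis guarantees $\Delta'$ is an orbifold divisor and psefness of $K_B$ gives psefness of $K_{B'}+\Delta'$), Theorem 2.11 of \cite{CP13} yields pseudo-effectivity of $M=K_{B'/Z'}+(\Delta')^{hor}-D(g',0)$, while Proposition 1.9 of \cite{CP13} identifies $-\det(\cF)\cdot C$ with a positive multiple of $M\cdot C'$ for a general complete intersection curve; this contradicts $\det(\cF)\cdot C>0$ directly, with no appeal to uniruledness or BDPP. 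If you want to salvage your approach, you would have to replace the last step by an argument of this relative/orbifold positivity type (or restrict the lemma to canonical singularities, which is not what is needed here, since the application only provides log-canonical ones).
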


\begin{proof} By general properties of slopes (see e.g. pages 9-10 of \cite{Cl}), it suffices to show that the 
minimal slope $\mu_{\alpha}^{min}(\Omega^1_B) \geq 0$ (by the {\it minimal slope}, one means the smallest possible 
slope of a quotient sheaf with respect to $\alpha$); recall that
we are interested in degrees of restrictions to curves $C$ not passing through the singularities of 
$B$ and therefore all sheaves we consider are locally free on such $C$. Assume there is a
quotient of $\Omega^1_B$ with $\alpha$-negative slope, then the maximal destabilising subsheaf for the dual 
defines an $\alpha$-semistable foliation 
$\cF$ on $B$ with $\alpha$-positive slope (integrability follows from Miyaoka's slope argument). 
By \cite{BMQ}, $\cF$ is algebraic. So there exists a rational fibration $g:B\dasharrow Z$ such that $\cF=Ker(dg)$. 

Taking a neat model (see \cite{CP13}, p. 848) of $g$, obtained by blowing up $B$ and $Z$, we get 
$g':B'\to Z'$ with $B',\ Z'$ smooth. We can write 
$K_{B'}+\Delta'=b^*(K_B)+E$, where $b:B'\to B$ is our blow-up, for $\Delta'$ and $E$ some 
effective 
$b$-exceptional $\Bbb Q$-divisors on $B'$ without common components.  
The divisor $\Delta'$ on $B'$ is an orbifold divisor (that is has coefficients between $0$ and $1$) 
because $B$ is log-canonical, and $K_{B'}+\Delta'$ is pseudo-effective, since so is $K_B$. Let
$D(g',0)$ denote the ramification divisor of $g'$, that is, the sum of the $(g'^*F-(g'^*F)_{red})^{surj}$
over all prime divisors $F$ of $Z'$,
where the superscript means that we consider only the non-exceptional components of $g'^*F-(g'^*F)_{red}$,
i.e. those which map surjectively to $F$ (cf. \cite{CP13}, p.848). 
Theorem 2.11 of \cite{CP13} shows that $M:=K_{B'/Z'}+(\Delta')^{hor}-D(g',0)$ is 
pseudo-effective as well (here the superscript denotes the ``horizontal part'', dominating the base). Thus, denoting by $C'$ the strict transform of a curve $C$ which is
a generic complete intersection of large multiples of $H$, we obtain  $MC'\geq 0$. On the other 
hand, it follows from Proposition 1.9 of \cite{CP13} that up to a positive normalisation constant $k$, we have:
$-det(\cF)C=k.MC'$, a contradiction since by construction $det(\cF)C>0$. Note that though 
the calculation
of this proposition is made in the orbifold context we actually do not need orbifold differentials: as
the generic curve $C$ avoids the singularities, $C'$ avoids $\Delta'$, so that only the
ramification of $g'$ contributes.
\end{proof}

\begin{rem}\label{rmiyvscp} An example where Miyaoka's result does not apply, while lemma \ref{gsp} does, is the following Ueno surface. Let $A=E\times E$ be the product of two copies of the elliptic curve $E$ with complex multiplication by $i=\sqrt{-1}$. Let $S:=A/\Bbb Z_4$, the generator acting by $i$ simultaneously on both factors. Then $S$ is a rational surface with 16 quotient singularities, not all canonical. On $S$ there is no pair $(\cF,H)$ consisting of a rank $1$ foliation $\cF$ and a polarisation $H$ such that the $H$-slope of $\cF$ is positive. This follows from \ref{gsp}, but can also be checked directly. Indeed, otherwise both $\cF$ and $H$ could be lifted to $A$ with this same intersection property, since the quotient map $q:A\to S$ is \'etale over $S^{sm}$, and $A$ is smooth. But such a foliation does not exist on $A$, since its tangent bundle is trivial. The surface $S$ is uniruled, therefore the absence of such foliations is not directly implied by \cite{Mi}.
\end{rem}

We return to the proof of Theorem 2.2.

The determinant of any subsheaf of $\Omega^1_B$ restricted to $B^{sm}$ has non-positive 
Kodaira dimension; this also remains true for
finite coverings of $B$, \'etale over $B^{sm}$.

Following \cite{HV}, we now construct a subsheaf of $\Omega^1_B$ (or more precisely of $\Omega^1_{B'}$
where $B'$ is such a covering) such that the Kodaira dimension of its
determinant over $B^{sm}$ is equal to the variation of moduli of our family of curves; the argument is shorter here
since we have remarked that $f$ is submersive in codimension one.

Indeed, it suffices to do so outside of a codimension-two algebraic subset in 
$B$, that is, over $B^0$ which is smooth and such that the restriction $f:D^0 \to B^0$ of
$f:D\to B$ is a smooth family of curves. It is well-known (see e.g. \cite{HV}, Lemma 3.1) that,
after replacing $B^0$ by a finite \'etale covering, the family $f:D^0\to B^0$ becomes the pull-back of the universal
family of curves with level $N$ structure $g:{\cC}_g^{[N]}\to M_g^{[N]}$ under a morphism 
$j: B^0\to M_g^{[N]}$ for a suitable $N>>0$.

Since $D^0$ is now a smooth family of curves over a smooth base $B^0$, 
one can consider the ``Kodaira-Spencer map'' $$f_*(\omega_{D^0/B^0}^{\otimes 2})\to \Omega^1_{B^0}$$ 
obtained by dualizing the usual Kodaira-Spencer map from $T_{B^0}$ to 
$R^1f_*T_{D^0/B^0}$ associated to the family of curves $f:D^0\to B^0$. 
Let $\cH \subset \Omega^1_{B^0}$ be its image: it is a coherent subsheaf of $\Omega^1_{B^0}$. 
Moreover, it is functorial in $B^0$, that is, its construction commutes with base change.

\begin{proposition}\label{hv} (cf. \cite{HV}, proposition 4.4) Assume that $g\geq 2$. Then $\kappa(B^0,det(\cH))=Var(f)=dim(Im(j))$.
\end{proposition}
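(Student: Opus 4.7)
The plan is to use the functoriality of the Kodaira-Spencer map under base change together with the fact that on the fine moduli space $M_g^{[N]}$ (with $g\geq 2$) the universal Kodaira-Spencer map is an isomorphism.

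For the universal family $g:{\cC}_g^{[N]}\to M_g^{[N]}$, the Kodaira-Spencer map
$T_{M_g^{[N]}}\to R^1g_*T_{{\cC}_g^{[N]}/M_g^{[N]}}$
is an isomorphism of locally free sheaves, since $M_g^{[N]}$ is a fine moduli space and the tangent space at $[C]$ is canonically $H^1(C,T_C)$. Dualising and using $T_{{\cC}_g^{[N]}/M_g^{[N]}}\simeq\omega_{{\cC}_g^{[N]}/M_g^{[N]}}^{-1}$ together with relative Serre duality, I obtain that
$$g_*\bigl(\omega_{{\cC}_g^{[N]}/M_g^{[N]}}^{\otimes 2}\bigr)\longrightarrow \Omega^1_{M_g^{[N]}}$$
is also an isomorphism. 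Since $f=j^*g$ and cohomology and base change is in force for $R^1g_*T$ on curve fibres (as $h^0(T_C)=0$ for $g\geq 2$), the sheaf $\cH$ equals the image of the cotangent map $dj^{\vee}:j^*\Omega^1_{M_g^{[N]}}\to\Omega^1_{B^0}$.

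Next, the generic rank of $dj^{\vee}$ is exactly $d:=\dim(\mathrm{Im}(j))$, so $\mathrm{rk}\,\cH=d$. Letting $V:=\mathrm{Im}(j)\subset M_g^{[N]}$ and $\widetilde V\to V$ be a desingularization, I factor $j$ (replacing $B^0$ by a suitable birational model if needed) as $B^0\xrightarrow{\pi}\widetilde V\xrightarrow{\iota}M_g^{[N]}$ with $\pi$ surjective and generically finite and $\iota$ finite. Applying the same construction to $\iota^*g$ produces a subsheaf $\cH_{\widetilde V}\subset\Omega^1_{\widetilde V}$ of generic rank $d=\dim\widetilde V$; by functoriality, $\pi^*\cH_{\widetilde V}$ and $\cH$ agree in codimension one, up to an effective divisor arising from the ramification of $\pi$. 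In particular $\kappa(B^0,\det\cH)=\kappa(\widetilde V,\det\cH_{\widetilde V})$, reducing the problem to proving $\kappa(\widetilde V,\det\cH_{\widetilde V})=\dim\widetilde V$.

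Since $\cH_{\widetilde V}$ is a maximal-rank subsheaf of $\Omega^1_{\widetilde V}$, its determinant embeds in $K_{\widetilde V}$; what remains is to prove the bigness of $\det\cH_{\widetilde V}$. This is the main obstacle, and here I would follow the argument of \cite{HV}, Proposition 4.4: one combines Viehweg's weak positivity for the Hodge bundle $g_*(\omega_{\cC_g^{[N]}/M_g^{[N]}}^{\otimes 2})$ with the non-degeneracy of the Kodaira-Spencer map on $\widetilde V$ (which holds because $\iota$ is finite, so the family over $\widetilde V$ has maximal variation) to conclude that the determinant of its image is big on $\widetilde V$. This bigness step is where the hypothesis $g\geq 2$ is used essentially, both through Viehweg's theorem and through the Torelli-type positivity of the Hodge bundle in the curve case; the remainder of the argument is purely formal.
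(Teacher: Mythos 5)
Your first step --- using the Kodaira--Spencer isomorphism on the fine moduli space to identify $\cH$ with the image of the codifferential $j^*\Omega^1_{M_g^{[N]}}\to\Omega^1_{B^0}$ --- is correct and matches the paper's identification of $f_*(\omega_{D^0/B^0}^{\otimes 2})$ with $j^*g_*(\omega_{\cC_g^{[N]}/M_g^{[N]}}^{\otimes 2})$. The gap is in the step you yourself call the main obstacle. You propose to get bigness of $\det\cH_{\widetilde V}$ from ``Viehweg's weak positivity for the Hodge bundle'' combined with maximal variation. Weak positivity of $g_*(\omega^{\otimes 2})$ holds for every family of curves, isotrivial or not, and it does not formally imply that the determinant of the image in $\Omega^1$ is big; the implication ``maximal variation plus weak positivity gives a big subsheaf of $\Omega^1$'' is essentially Viehweg's hyperbolicity conjecture, i.e.\ exactly the kind of deep statement that cannot be invoked as a routine step. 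What \cite{HV} (and the paper) actually use is different and stronger: their Proposition 4.3 states that $g_*(\omega_{\cC_g^{[N]}/M_g^{[N]}}^{\otimes 2})$ is \emph{ample} on the fine moduli space $M_g^{[N]}$, and their Lemma 4.2, a formal statement about images of pull-backs of ample sheaves under a morphism $j$, then gives $\kappa(B^0,\det(\cH))\geq \dim(\mathrm{Im}(j))$ directly on $B^0$; the opposite inequality is the easy one. So the positivity you need does exist in the cited reference, but not under the name or in the form you invoke, and as written your sketch does not close.

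A second, smaller problem is the reduction to $\widetilde V$, which is both unnecessary (the Lemma 4.2 argument is applied on $B^0$ itself) and imprecise. The map $\pi:B^0\to\widetilde V$ is generically finite only if $Var(f)=\dim B^0$, which you do not know (and which fails in the end, since the paper proves $Var(f)=0$); for a fibre-type $\pi$ the discrepancy between $\cH$ and $\pi^*\cH_{\widetilde V}$ is not ``ramification''. Moreover the comparison goes the wrong way for your purposes: from the surjection $\pi^*\cH_{\widetilde V}\to\cH$ with torsion kernel one gets $\det(\cH)\cong\pi^*\det(\cH_{\widetilde V})\otimes\cO(-E)$ with $E$ effective, which yields $\kappa(B^0,\det(\cH))\leq\kappa(\widetilde V,\det(\cH_{\widetilde V}))$ but not the equality you assert; since the pull-back of a big class from a lower-dimensional base minus an effective divisor can lose Iitaka dimension, the inequality you actually need does not follow from this reduction and must be obtained, as in the paper, by running the ampleness argument on $B^0$ itself.
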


\begin{proof} The sheaf $f_*(\omega_{D^0/B^0}^{\otimes 2})$ is the pull-back by $j$ of $g_*(\omega_{{\cC}_g^{[N]}/M_g^{[N]}}^{\otimes 2})$,
and the latter is ample by \cite{HV}, Proposition 4.3. We conclude by \cite{HV}, Lemma 4.2.
\end{proof}

\begin{corollary}\label{isotrivial}
The fibration $f:D\to B$ is isotrivial.
\end{corollary}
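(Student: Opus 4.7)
Since the cases $g = 0$ and $g = 1$ are already handled in subsection \ref{sf}, I may assume $g \geq 2$. The idea is to play Theorem \ref{odiff} (which bounds Iitaka dimensions of determinants of subsheaves of $\Omega^1_B$) against Proposition \ref{hv} (which computes such an Iitaka dimension as the variation of moduli): if the bound and the computation are compatible, the variation must vanish.

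First I would collect the ingredients needed to apply Theorem \ref{odiff}. By Lemma \ref{nodelta}, $K_B$ is trivial as a Weil $\Q$-Cartier divisor, and by Reeb stability (subsection \ref{sf}) $B$ has only quotient singularities, hence is log-canonical. Let $B_1 \subset B$ denote the open locus over which $f$ is a smooth family of curves; its complement has codimension at least two in $B$, since $f$ is uni-smooth and has no multiple fibres in codimension one. Following the setup preceding Proposition \ref{hv}, I would replace $B_1$ by a finite \'etale cover $B_1'$ trivialising the level-$N$ structure on the family of curves, and then extend this to a finite cover $\tau \colon B' \to B$ which is \'etale in codimension one. Since $\tau$ ramifies only over a codimension-two set, $B'$ is again normal projective with only quotient singularities (hence log-canonical) and trivial Weil canonical class, so Theorem \ref{odiff} applies to $B'$.

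Next I would extend the Hodge-theoretic subsheaf $\cH \subset \Omega^1_{B_1'}$ of Proposition \ref{hv} across the codimension-two complement to a coherent subsheaf $\widetilde{\cH} \subset \Omega^1_{B'}$, by taking the saturation of the image of $(j_*\cH)$ inside the reflexive sheaf $\Omega^1_{B'}$ (where $j \colon B_1' \hookrightarrow B'$ is the inclusion). Theorem \ref{odiff}, applied to $\det\widetilde{\cH}$, then yields $\kappa(B'_{sm}, \det\widetilde{\cH}) \leq 0$. On the other hand, Proposition \ref{hv} identifies $\kappa(B_1', \det\widetilde{\cH}) = \mathrm{Var}(f)$. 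Since $B_1'$ and $B'_{sm}$ differ only by a codimension-two subset of a normal projective variety, the two Iitaka dimensions coincide, which forces $\mathrm{Var}(f) \leq 0$ and hence $\mathrm{Var}(f) = 0$. Therefore the moduli map $j \colon B_1' \to M_g^{[N]}$ is constant, and $f$ is isotrivial (on $B'$, and so on $B$ as well, $\tau$ being finite).

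The one point requiring a little care is the verification that $B'$ satisfies the hypotheses of Theorem \ref{odiff} after the cover, but this is automatic from the fact that $\tau$ is \'etale in codimension one over a variety with trivial Weil canonical class and log-canonical singularities. Everything else is a direct juxtaposition of the two theorems previously established.
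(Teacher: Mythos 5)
Your proposal is correct and follows essentially the same route as the paper: it combines Theorem \ref{odiff} (non-positivity of $\kappa$ of determinants of subsheaves of $\Omega^1_B$, valid since $K_B$ is trivial and $B$ is log-canonical, and preserved under covers \'etale in codimension one) with Proposition \ref{hv} (the Kodaira--Spencer subsheaf $\cH$ with $\kappa(\det\cH)=\mathrm{Var}(f)$) to force $\mathrm{Var}(f)=0$. The extra bookkeeping you supply (extending the level-structure cover to $\tau\colon B'\to B$ \'etale in codimension one and saturating $\cH$ across the codimension-two locus) is exactly what the paper leaves implicit, so this is the same argument, just spelled out in more detail.
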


Indeed, by \ref{odiff} we know that the Kodaira dimension of the determinant 
of any subsheaf of $\Omega^1_{B^{sm}}$ is non-positive, and so 
$\kappa(B^{sm},det(\cH))=Var(f)=0$. This finishes the proof of theorem \ref{tmr-form}.

\subsection{A more general conjectural isotriviality statement.}

The corollary \ref{isotrivial} is a special case of the following more general conjectural statement, which slightly generalises \cite{T}\footnote{In \cite{T}, the conjecture is established when $B$ is smooth and $\Delta=0$.}:

\begin{conjecture} Let $f:X\to B$ be a proper, connected, quasi-smooth\footnote{That is, the reduction of every fibre is smooth.} fibration of quasi-projective varieties, where $X$ is smooth and $B$ is normal. Assume that the (reduced) fibres of $f$ have semi-ample canonical class, and that the orbifold base $(B,\Delta)$ of $f$ is special in the following sense (cf. \cite{Ca07}): for any $p>0$ and any coherent rank-one subsheaf $L\subset (f^*(\Omega^p_B))^{sat}$, where the saturation takes place 
in $\Omega^p_X$, one has $\kappa(X,L)<p$. Then $f$ is isotrivial.
\end{conjecture}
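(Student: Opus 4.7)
The strategy is to argue by contradiction: assume $\mathrm{Var}(f)>0$ and exhibit, for some $p>0$, a coherent rank-one subsheaf $L\subset (f^*\Omega^p_B)^{sat}\subset \Omega^p_X$ with $\kappa(X,L)\geq p$, contradicting the specialness hypothesis on $(B,\Delta)$. This follows the Viehweg--Zuo / Campana--Paun hyperbolicity paradigm, extending in two directions beyond the case treated by Taji: from canonically polarized fibres to fibres with semi-ample $K_F$, and from a smooth base to an orbifold one. Once the contradiction is in hand, $\mathrm{Var}(f)=0$, and the Koll\'ar--Viehweg moduli theory then promotes the vanishing of the variation to honest isotriviality, using the semi-ampleness of $K_F$ to rigidify the moduli functor.

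First I would reduce to a setting in which the classical machinery applies. Via the Kawamata covering trick along the components of $\Delta$, construct a finite cover $B'\to B$, \'etale in codimension one outside $\Delta$ and ramified with the prescribed orbifold multiplicities along $\Delta$, so that the pulled-back fibration $f':X'\to B'$ is quasi-smooth without multiple fibres in codimension one; after resolution one may assume $B'$ smooth. The orbifold specialness of $(B,\Delta)$ then translates into the statement that $B'$ carries no rank-one subsheaf of $\Omega^p_{B'}(\log \Sigma)$ of Kodaira dimension $\geq p$, for an appropriate snc divisor $\Sigma$ supported on the ramification and the discriminant. Now apply the Viehweg--Zuo theorem together with its logarithmic/orbifold refinement \cite{CP15}: positivity of $\mathrm{Var}(f')$ produces a rank-one subsheaf of $\mathrm{Sym}^{[N]}\Omega^1_{B'}(\log \Sigma)$ with Kodaira dimension at least $\mathrm{Var}(f')$; passing to a tensor factor and to $\Omega^p_{B'}(\log \Sigma)$ for a suitable $p$ gives a subsheaf of the required type. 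The semi-ample (not ample) case is handled by first taking the relative Iitaka fibration of $\omega_{X'/B'}$, whose generic fibre is the canonical model of the fibre of $f'$ and hence canonically polarized, and applying Viehweg--Zuo to this auxiliary family. Pulling back to $X'$ lands inside the saturation of $(f')^*\Omega^p_{B'}(\log \Sigma)$ by the horizontal nature of the Hodge-theoretic construction (Griffiths transversality). Finally, descending by Galois averaging from $X'$ to $X$ yields a rank-one subsheaf of $(f^*\Omega^p_B)^{sat}\subset \Omega^p_X$ with Kodaira dimension $\geq p$, contradicting specialness.

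\textbf{Main obstacle.} The technical heart is producing the Viehweg--Zuo subsheaf with the correct value of $p\leq \mathrm{Var}(f')$ in this combined generality. The reduction to canonically polarized fibres via the relative Iitaka fibration is delicate wherever the Iitaka dimension drops over the discriminant of $f'$, and the Campana--Paun orbifold refinement is required to accommodate $\Delta$. A further bookkeeping difficulty is that the Viehweg--Zuo construction naturally lives in logarithmic tensor bundles on $X'$, whereas the conjecture is formulated in terms of the non-logarithmic $(f^*\Omega^p_B)^{sat}\subset \Omega^p_X$: matching these two sheaves while correctly tracking the orbifold multiplicities introduced by the Kawamata cover is the step most likely to require genuinely new input beyond \cite{CP13}, \cite{CP15} and \cite{T}, and is in my view the reason the statement remains only a conjecture.
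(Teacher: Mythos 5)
First, note that the statement you were asked to prove is stated in the paper as a \emph{conjecture}: the authors give no proof of it, and only remark (deferring details to a future publication) that the special case of families of curves over a base with trivial orbifold canonical class can be handled by combining the orbifold generic semi-positivity of \cite{CP13} with the full moduli argument of \cite{HV}. So there is no proof in the paper to compare with, and your text is, as you yourself say, a strategy outline rather than a proof; the honest verdict is that it contains genuine gaps, which is consistent with the statement being open.

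The most concrete gap is your reduction to canonically polarized fibres via the relative Iitaka fibration of $\omega_{X'/B'}$. The hypothesis is only that the reduced fibres have \emph{semi-ample} canonical class, so $\kappa(F)$ may be $0$ or intermediate. When $\kappa(F)=0$ (e.g.\ elliptic fibres, which is exactly the case of interest in this paper, or families of abelian or Calabi--Yau fibres) the canonical models are points, the auxiliary family you propose to feed into Viehweg--Zuo is trivial, and the construction produces nothing, while the conjecture still asserts isotriviality; no Viehweg--Zuo-type sheaf can detect the variation of such families, and their isotriviality over special (orbifold) bases is itself a deep open problem, not a consequence of \cite{CP15} or \cite{T}. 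A second substantive gap is the step ``passing to a tensor factor and to $\Omega^p_{B'}(\log\Sigma)$ for a suitable $p$'': going from a big rank-one subsheaf of some $\mathrm{Sym}^N\Omega^1_{B'}(\log\Sigma)$ to a Bogomolov-type subsheaf $L\subset\Omega^p$ with $\kappa\geq p$ is not formal; in the canonically polarized, non-orbifold case it is precisely the content of the Campana--P\u aun/Taji theorem, and redoing it with the orbifold divisor $\Delta$ and with the saturation taken in $\Omega^p_X$ upstairs (rather than in log forms on a Kawamata cover of $B$) is the new input that would be needed. Finally, even granting a Viehweg--Zuo sheaf, the translation of the specialness hypothesis through the Kawamata covering trick and the descent ``by Galois averaging'' back to $(f^*\Omega^p_B)^{sat}$ is asserted rather than argued, and the multiplicities along $\Sigma$ have to be tracked in Campana's orbifold formalism of \cite{Ca07}, not just at the level of log forms. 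Since you flag the central construction as ``the reason the statement remains only a conjecture,'' your submission should be read as a plausible research program in the Viehweg--Zuo/Campana--P\u aun direction, not as a proof.
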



We would like to remark that the special case of this conjecture when $f:X\to B$ is a family of curves and the orbifold canonical bundle of the base is trivial can be proved by an argument similar to the one just given but much more subtle, using the orbifold generic semi-positivity 
of 
\cite{CP13}  and the full argument of \cite{HV}. Since this turns out to be irrelevant for the characteristic foliation by 
lemma \ref{nodelta}, we intend to publish this elsewhere.

\subsection{Consequences of isotriviality.}\label{isot}

\

Our goal now is to get the information on $K_D$ once the isotriviality is established. All arguments work in the compact K\"ahler case.
 Let us first remark that the relative canonical divisor
$K_{D/B}$ is well-defined as a $\Q$-Cartier divisor, and $K_D\equiv K_{D/B}$
since $K_B$ is trivial.

We first make a normalized base-change to remove all multiple fibers.

\begin{lemma}\label{lisok} Let $D$ be a compact connected K\"ahler manifold with a 
smooth rank-one foliation $\cF$ with compact leaves of genus $g\geq 1$. Let $f:D\to B$ be the associated proper fibration. Consider the normalized base-change 
$f_D:(D\times_B D)^{\nu}\to D$. Then $f_D$ is smooth.
\end{lemma}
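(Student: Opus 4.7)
The plan is to pass to the local model of $f$ provided by Reeb stability and compute the normalized fibre product explicitly; the multiplicities of the fibres of $f$ are precisely what gets resolved by taking the normalization of $D\times_B D$.

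Smoothness of $f_D$ can be checked locally on the source, and two points of $D$ with the same image under $f$ necessarily lie on the same leaf $C$, so it suffices to work in a Reeb-saturated neighbourhood of such a leaf. By the Reeb stability recalled in Section~\ref{sf}, this neighbourhood is biholomorphic to $U=(\tilde C\times T)/G$, where $G=G_C$ is the holonomy group, $T$ is a small transverse disk on which $G$ acts linearly, and $\tilde C$ is the holonomy covering of $C$, on which $G$ acts freely. The restriction of $f$ is the obvious projection $U\to T/G$, which represents the local picture of $f:D\to B$ around the image of $C$.

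The fibre product then takes the form
\[
U\times_{T/G}U \;=\; \bigl[\tilde C\times\tilde C\times(T\times_{T/G}T)\bigr]\big/(G\times G),
\]
with the evident $(G\times G)$-action. Set-theoretically, $T\times_{T/G}T$ is the union of the graphs $\Gamma_g=\{(t,gt):t\in T\}$ for $g\in G$; since $T\to T/G$ is a finite quotient of a smooth variety by a finite group in characteristic zero, this fibre product is reduced, with the $\Gamma_g$ as its smooth irreducible components, meeting along the $G$-fixed locus of $T$. Its normalization is therefore $\bigsqcup_{g\in G}T$. A direct computation gives $(g_1,g_2)\cdot\Gamma_g=\Gamma_{g_2 g g_1^{-1}}$, so $(G\times G)$ permutes the components transitively with stabilizer of $\Gamma_1$ equal to the diagonal $\Delta G\subset G\times G$. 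Since normalization commutes with the $(G\times G)$-quotient of a reduced scheme in characteristic zero, we conclude
\[
(U\times_{T/G}U)^\nu \;=\; (\tilde C\times\tilde C\times T)/\Delta G,
\]
with $\Delta G\cong G$ acting diagonally by $g\cdot(c_1,c_2,t)=(gc_1,gc_2,gt)$.

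Because $G$ acts freely on $\tilde C$, the diagonal action of $\Delta G$ on $\tilde C\times\tilde C\times T$ is free, and the quotient is smooth. The map $f_D$ is induced by $(c_1,c_2,t)\mapsto[(c_1,t)]_G\in U$, which factors as the smooth projection $\tilde C\times\tilde C\times T\to\tilde C\times T$ (forgetting $c_2$) followed by the quotient $\tilde C\times T\to U$; the latter is étale since $G$ acts freely on $\tilde C\times T$, and the composition is therefore smooth and descends to a smooth map on the $\Delta G$-quotient. The main technical point to track carefully is the interchange of normalization with the $(G\times G)$-quotient, which is precisely the step where the multiplicities of the fibres of $f$ get absorbed into the normalization.
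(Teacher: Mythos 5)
Your argument is, in substance, the paper's own proof of lemma \ref{lisok}: both reduce to the Reeb-stability local model recalled in section \ref{sf} and exhibit the projection from the normalized fibre product, locally, as a smooth projection followed by an \'etale (free finite) quotient. The paper phrases this more compactly, introducing the normalized base change $(D\times_U U')^{\nu}$ of $f$ over a local transverse $U'\to U\subset B$, which is smooth over $U'$ and \'etale over $f^{-1}(U)$, and identifying $(D\times_B D)^{\nu}$ locally with its product with a leaf chart; you instead compute the full self-fibre-product over a saturated neighbourhood $(\tilde C\times T)/G$ and obtain $(\tilde C\times\tilde C\times T)/\Delta G$. Your identification of the components $\Gamma_g$ of $T\times_{T/G}T$, the transitive $(G\times G)$-action with diagonal stabilizer, the freeness of the diagonal action (because $G$ acts freely on $\tilde C$), and the final smooth-composed-with-\'etale factorization are all correct and match the paper's conclusion.

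One intermediate assertion is false, though harmlessly so: $T\times_{T/G}T$ need not be reduced. Already for $G=\ZZ/2$ acting by $-\mathrm{id}$ on a two-dimensional slice $T$ with coordinates $(x,y)$ (a holonomy that genuinely occurs for multiple fibres in codimension two, cf.\ remark \ref{example-mf}), the element $x_1y_2-x_2y_1$ is a nonzero nilpotent in $\cO_T\otimes_{\cO_{T/G}}\cO_T$: its square lies in the ideal generated by $x_1^2-x_2^2$, $x_1y_1-x_2y_2$, $y_1^2-y_2^2$, but the element itself does not (compare degree-two homogeneous parts). This does not damage your proof, because normalization only depends on the underlying reduced space, and in characteristic zero reduction commutes with finite group quotients (taking invariants is exact via the Reynolds operator); so your displayed identification $(U\times_{T/G}U)^{\nu}=(\tilde C\times\tilde C\times T)/\Delta G$ and the smoothness of $f_D$ stand, provided you replace ``this fibre product is reduced'' by ``its reduction is the union of the graphs $\Gamma_g$''.
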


\begin{proof} By definition of a foliation, a neighbourhood of $x\in D$ is 
isomorphic to $U'\times F$,
where $F$ is a small open subset of the leaf through $x$ and $U'$ is a local transverse to the foliation.
Moreover, by Reeb stability, a small neighbourhood $U$ of $b\in B$ is $U'/G$ where $G$ is
the holonomy group, and $D_U'=(D\times_U U')^{\nu}$ is smooth over $U'$ and \'etale over 
$D_U=f^{-1}(U)$.
Hence $(D\times_BD)^{\nu}$, which locally in a neighbourhood of $x$ is naturally isomorphic to
$(D\times_U (U'\times F))^{\nu}=D_U'\times F$, is smooth over $D$: indeed the projection to $D$
is, locally, the composition of the smooth projection to $D_U'$ with the natural \'etale projection from $D_U'$ to $D_U$.
\end{proof}

Denote by $f':D'\to B'$ our new smooth family (so that $B'=D$ and $D'=(D\times_B D)^{\nu}$) and by $s:D'\to D$ the natural projection. 
Notice that since the normalization procedure only concerns the codimension-two locus, we have $K_{D'/B'}\equiv s^*K_{D/B}$.

It is well-known that a smooth isotrivial family of curves of genus $g$,
after a suitable finite base change,
becomes a product when $g\geq 2$, and a principal fibre bundle when $g=1$.
More precisely, we have the following lemma: 

\begin{lemma}\label{numdim} There exists a finite proper map $h':B''\to B'$ such that after 
base-changing $f'$ by $h'$, we get $f'':D''\to B''$ and $s':D''\to D'$ with the
following properties: 
$D''\cong F\times B''$ over $B''$ when $g\geq 2$, and $f'':D''\to B''$ is a principal fibre bundle if $g=1$. Moreover, $K_{D''/B''}$ 
is nef, 
$\kappa(D'',K_{D''/B''})=\nu(D'',K_{D''/B''})=1$ if $g\geq 2$, and $\kappa(D'',K_{D''/B''})=\nu(D'',K_{D''/B''})=0$ if $g=1.$
\end{lemma}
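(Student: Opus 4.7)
The plan is as follows. By Corollary~\ref{isotrivial} the smooth fibration $f':D'\to B'$ is isotrivial, so there is a fixed smooth projective curve $F$ of genus $g$ such that every fibre of $f'$ is isomorphic to $F$. I would construct a finite étale $h':B''\to B'$ after which the family becomes trivial (when $g\geq 2$) or a principal bundle (when $g=1$), and then read off the properties of $K_{D''/B''}$ from the resulting product or bundle structure. Since $h'$ will be étale, $s':D''\to D'$ is étale and $s'^*K_{D'/B'}=K_{D''/B''}$.

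Suppose first that $g\geq 2$. Then $\Aut(F)$ is a finite group, and the relative isomorphism scheme $\underline{\mathrm{Isom}}_{B'}(F\times B',\,D')$ is an étale $\Aut(F)$-torsor on $B'$. Taking $B''$ to be a connected component of this torsor (equivalently, a finite Galois étale cover trivialising the monodromy), the base change $D'':=D'\times_{B'}B''$ carries a tautological isomorphism $D''\simeq F\times B''$ over $B''$. Under this isomorphism $K_{D''/B''}=p_F^*K_F$, where $p_F:F\times B''\to F$ is the projection. Since $K_F$ is ample, $K_{D''/B''}$ is nef. The Künneth formula gives $H^0(D'',\,m\cdot p_F^*K_F)\simeq H^0(F,mK_F)\ot H^0(B'',\cO_{B''})$, so $\kappa(K_{D''/B''})=\kappa(K_F)=1$. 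Moreover $\nu$ is preserved under pullback by surjective morphisms and $\nu(K_F)=1$, so $\nu(K_{D''/B''})=1$.

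Now suppose $g=1$. Here $\Aut(F)=F\rtimes G$, where $G:=\Aut(F,0)$ is the finite group of automorphisms fixing a chosen origin. The isotrivial family $f'$ is classified up to isomorphism by a class in the étale cohomology $H^1(B',\Aut(F))$, whose image in $H^1(B',G)$ is represented by a finite étale $G$-cover $h':B''\to B'$. After pulling back by $h'$, the residual class lies in $H^1(B'',F)$ and classifies a principal $F$-bundle; hence $f'':D''\to B''$ is a principal $F$-bundle. For such a bundle, translation along the fibres produces a nowhere-vanishing global section of $T_{D''/B''}$, so $K_{D''/B''}\simeq\cO_{D''}$ is trivial, hence nef, with $\kappa(K_{D''/B''})=\nu(K_{D''/B''})=0$.

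The main technical point is the elliptic case, where $\Aut(F)$ is positive-dimensional; the key observation is that only the discrete quotient $G$ needs to be killed by a finite étale cover, after which the residual cocycle, living in $H^1(B'',F)$, automatically yields a principal $F$-bundle structure. Once the product or principal-bundle structure is in place, the assertions on nefness and on the Iitaka and numerical dimensions of $K_{D''/B''}$ are immediate from the corresponding statements on $F$.
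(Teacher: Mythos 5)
Your proof is correct and follows essentially the same route as the paper: isotriviality makes $f'$ a locally trivial bundle with structure group $\Aut(F)$, a finite cover trivializes it when $g\geq 2$ (since $\Aut(F)$ is finite), and killing the finite quotient $\Aut(F)/F$ yields the principal bundle structure when $g=1$, after which the statements on $K_{D''/B''}$ follow from the product, respectively principal-bundle, structure. The only cosmetic difference is in the elliptic case, where you trivialize $T_{D''/B''}$ by the fundamental vector field of the $F$-action while the paper identifies $K_{D''/B''}$ with the dual of $f''^*(R^1f''_*\cO_{D''})$; these are equivalent.
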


Here $\nu$ denotes the numerical dimension.

\begin{proof} The smooth isotrivial family $f'$ is a locally trivial bundle with structure
group $Aut(F)$, where $F$ is a fiber.  If $g\geq 2$, this is a finite group, so that
the bundle trivializes after a finite covering $h':B''\to B'$. If $g=1$, we get the 
 principal bundle structure after a finite covering corresponding to the quotient 
of $Aut(F)$ by the translation subgroup. The second claim is obvious when $g\geq 2$. 
When $g=1$, we remark 
that $K_{D''/B''}$ is dual to $f''^*(R^1f''_*(\cO_{D''}))$, and the latter
is trivial since translations on an elliptic curve operate trivially on 
cohomology.
\end{proof}

\begin{corollary}\label{liso} Let $f:D\to B$ be as above. Then $K_D$ is nef, 
$\kappa(D)=\nu (D,K_D)=1$ if  $g\geq 2$, and $\kappa(D)=\nu (D,K_D)=0$ if  $g=1.$
\end{corollary}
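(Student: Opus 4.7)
The plan is to descend the properties of $K_{D''/B''}$ established in Lemma \ref{numdim} down the tower $D''\to D'\to D$ to the corresponding statements for $K_D$. The starting point is the identification $K_D\equiv K_{D/B}$: since $K_B\equiv 0$ by Lemma \ref{nodelta}, the pullback $f^*K_B$ is numerically trivial and the absolute and relative canonical classes on $D$ agree numerically. Thus it suffices to prove the nefness, Kodaira-dimension and numerical-dimension statements for $K_{D/B}$.

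The first descent, from $D''$ to $D'$, is along the finite surjective base-change map $s'':D''\to D'$, for which $K_{D''/B''}=(s'')^*K_{D'/B'}$ by base change. Nefness, Kodaira dimension and numerical dimension of $\mathbb{Q}$-Cartier divisors all behave as expected under finite surjective pullback: nefness because any curve on $D'$ lifts to a curve on $D''$ mapping finitely onto it and the projection formula applies, and the two dimensions because $s''_*\cO_{D''}$ is a finite $\cO_{D'}$-module. So the conclusions of Lemma \ref{numdim} pass directly to $K_{D'/B'}$.

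The second descent, from $D'$ to $D$, uses the equivalence $K_{D'/B'}\equiv s^*K_{D/B}$ recorded after Lemma \ref{lisok}, along the projection $s:D'\to D$. Unlike $s''$, this map is not finite but is smooth proper with connected one-dimensional fibres. All three properties still descend. Nefness descends because any curve $C\subset D$ admits a curve $C'\subset D'$ with $s|_{C'}$ a finite cover of $C$ (cut $s^{-1}(C)$ by generic hyperplanes on $D'$), and then $K_{D/B}\cdot C=\frac{1}{\deg(s|_{C'})}\,s^*K_{D/B}\cdot C'\geq 0$. For the Kodaira dimension, $s_*\cO_{D'}=\cO_D$ (smooth proper with connected fibres), so $H^0(D',(s^*K_{D/B})^{\otimes m})=H^0(D,K_{D/B}^{\otimes m})$ for every $m$. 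For the numerical dimension, the projection formula gives $(s^*K_{D/B})^k\cdot(s^*A)^{d-k}\cdot H=(K_{D/B}^k\cdot A^{d-k})\cdot\deg(H|_F)$ for $A$ ample on $D$, $H$ ample on $D'$ and $F$ a general fibre of $s$; combined with the easy inequality $\nu(s^*K_{D/B})\leq\nu(K_{D/B})$ obtained from $K_{D/B}^k\equiv 0\Rightarrow(s^*K_{D/B})^k\equiv 0$, this forces equality of the numerical dimensions.

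The main (indeed only) non-routine point is the passage of the numerical dimension across the non-finite morphism $s$, handled by the projection-formula computation above; everything else is essentially formal. Combining the two descents yields that $K_D$ is nef with $\kappa(D)=\nu(D,K_D)=1$ when $g\geq 2$ and $\kappa(D)=\nu(D,K_D)=0$ when $g=1$, as claimed.
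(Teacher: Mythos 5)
Your proposal is correct and follows essentially the same route as the paper: the paper's proof also identifies $NK_{D''/B''}$ with $N$ times the pullback of $K_D$ along $D''\to D'\to D$ and invokes the invariance of nefness, numerical dimension and Kodaira dimension under pullback by surjective morphisms, which you simply verify by hand in the two steps (finite base change, then the fibration $s$ with $s_*\cO_{D'}=\cO_D$). The only point worth noting is that transferring $\kappa$ needs the equivalences $K_{D'/B'}\equiv s^*K_{D/B}$ and $K_D\equiv K_{D/B}$ to hold linearly (which they do, since they agree outside codimension two on smooth, resp. normal, varieties), a subtlety present at the same level of precision in the paper's own proof.
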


\begin{proof} Since $$NK_{D''/B''}\equiv Ns'^*(K_{D'/B'})\equiv Ns'^*(s^*(K_{D/B}))\equiv N(h\circ h')^*(K_D),$$ this follows from the preceding lemma,  by the preservation of nefness, numerical dimension and Kodaira-Moishezon dimension under inverse images.\end{proof}

This finishes the proof of the theorem \ref{isotriviality} in the projective case. Remark that when $g=1$, 
this argument also proves the K\"ahler case, since the isotriviality, for which the projectivity assumption
was needed, is then automatic. This shall be used in the proof of Corollary 5.2.

In the next section, we shall give a proof of the theorem \ref{irreducibleHK}.

\section{Divisors on irreducible hyperk\"ahler manifolds.}\label{cor1}

We suppose now that $X$ is a projective irreducible holomorphic symplectic manifold
of dimension $2n\geq 4$,
$D\subset X$ is a smooth non-uniruled divisor on $X$ and the fibres of 
$f: D\to B$ are curves of non-zero genus tangent to the kernel of the restriction of the holomorphic
symplectic form $\sigma$ to $D$. Recall that on the second cohomology of $X$
there is a non-degenerate bilinear form $q$, the {\it Beauville-Bogomolov form}.

By corollary \ref{liso} $\nu(K_D)\leq 1< \frac{dim(X)}{2}$. On the other hand,
we have the following well-known lemma (see for instance \cite{mat-numdim}, lemma 1, keeping in
mind that by Fujiki formula $D^{2n}$ is proportional to 
$q(D,D)^n$ with non-zero coefficient, and that the numerical dimension $\nu(D)$ of a nef divisor $D$ is the maximal
number $k$ such that the cycle $D^k$ is numerically non-trivial).

\begin{lemma}\label{nu}Let $D$ be a non-zero nef divisor on an irreducible hyperk\"ahler manifold $X$. Then either $\nu(D)=dim(D)$ (if $q(D,D)>0$), or $\nu(D)=\frac{dim(X)}{2}$ (if $q(D,D)=0$).
\end{lemma}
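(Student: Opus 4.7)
\smallskip

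\noindent\emph{Proof plan for Lemma \ref{nu}.} The strategy is to derive everything from the Fujiki relation
$$\int_X \alpha^{2n} \;=\; c_X\, q(\alpha,\alpha)^n \qquad (\alpha\in H^2(X,\mathbb{R})),$$
where $c_X>0$ is the Fujiki constant of $X$, together with its polarized version obtained by substituting $\alpha=D+tH$ for an ample class $H$ and comparing coefficients in $t$.

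First I will dispose of the easy case $q(D,D)>0$. Setting $t=0$ in the Fujiki relation applied to $D$ yields $\int_X D^{2n}=c_X q(D,D)^n>0$, so $D^{2n}$ is numerically nontrivial and $\nu(D)=2n=\dim X$. (This is the well-known fact that a nef class with positive square on a hyperk\"ahler manifold is big.)

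For the main case $q(D,D)=0$ I will expand both sides of
$$\int_X(D+tH)^{2n}=c_X\bigl(q(D,D)+2tq(D,H)+t^2q(H,H)\bigr)^n$$
as polynomials in $t$. With $q(D,D)=0$ the right side equals $c_X\, t^n\bigl(2q(D,H)+tq(H,H)\bigr)^n$, which is divisible by $t^n$, while the left side is $\sum_k\binom{2n}{k}t^k\,D^{2n-k}\cdot H^k$. Matching coefficients of $t^k$ for $k<n$ forces
$$D^m\cdot H^{2n-m}=0\quad\text{for every }m>n,$$
so $D^{n+1}$ is numerically trivial and $\nu(D)\le n$. Matching the coefficient of $t^n$ gives
$$\binom{2n}{n}\,D^n\cdot H^n\;=\;c_X\cdot 2^n\, q(D,H)^n,$$
so the remaining issue is showing $q(D,H)>0$.

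This positivity is the one step that needs external input; I expect it to be the main obstacle, although a standard one. I will use that $q$ restricted to $NS(X)_{\mathbb{R}}$ has Lorentzian signature $(1,\rho-1)$, that the Kähler (hence ample) classes lie in the open positive cone $\mathcal{C}^+$, and that the nef cone is contained in $\overline{\mathcal{C}^+}$. By the standard reverse Cauchy–Schwarz inequality in Lorentzian signature, $\overline{\mathcal{C}^+}$ is self-dual under $q$: for any nonzero $D\in\overline{\mathcal{C}^+}$ and any $H\in\mathcal{C}^+$ one has $q(D,H)>0$. (Alternatively, one may polarize Fujiki at the coefficient of $t^{2n-1}$ to get $\int_X D\cdot H^{2n-1}=c_X q(D,H)q(H,H)^{n-1}$, and invoke $D\cdot H^{2n-1}>0$ for nef nonzero $D$ and ample $H$.) Either way, $D^n\cdot H^n>0$, so $\nu(D)=n=\dim(X)/2$, completing the proof.
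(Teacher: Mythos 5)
Your proof is correct and follows essentially the same route the paper intends: the paper simply cites Matsushita's lemma together with the Fujiki relation, i.e.\ exactly the expansion of $\int_X(D+tH)^{2n}=c_Xq(D+tH,D+tH)^n$ that you carry out, with the key positivity $q(D,H)>0$ supplied (as you note) by the Lorentzian signature of $q$ on $NS(X)_{\mathbb R}$ or by the $t^{2n-1}$-coefficient identity. One small remark: in the case $q(D,D)>0$ the correct conclusion is $\nu(D)=\dim X=2n$ (big and nef), as your argument actually shows; the ``$\dim(D)$'' in the statement is a harmless slip of the paper, and only the case $q(D,D)=0$ is used later.
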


Note that $\nu(X, D)=\nu(D, K_D)+1$, since $K_D=D|_D$. Therefore $\nu(D)\leq 2$ 
and the only 
possibility is $dim(X)=4$, $\nu(X, D)=2$, $\nu(D, K_D)=\kappa(D)=1$, $g\geq 2$.
This case can be excluded as follows: since $\kappa(D)=\nu(D, K_D)$, $D$ is a good
minimal model and the Iitaka
fibration $\phi: D\to C$ is a regular map. Its fibers $S$ are equivalent
to $D^2$ as cycles on $X$, and therefore are lagrangian. Indeed, it follows 
from the definition of the Beauville-Bogomolov form $\sigma$ on $X$ that
$$\int_S\sigma\bar{\sigma}=q(D,D)=0,$$
and this implies that the restriction of $\sigma$ to $S$ is zero. 
So the leaves of the characteristic foliation must be contained in the
fibers of $\phi$, giving the fibration of $S$ in curves of genus at least $2$. But
this is impossible on $S$, since $S$ is a minimal surface of Kodaira 
dimension zero. 

This proves theorem \ref{irreducibleHK}.

\section{Divisors on general projective symplectic manifolds.} \label{cor2}

The purpose of this section is to prove theorem \ref{classification}.

Recall the setting: $(X, \sigma)$ is a holomorphic symplectic projective variety, 
$D\subset X$ is a smooth hypersurface such that its characteristic foliation 
$\cF$ is algebraic and the genus $g$ of the leaves is strictly positive. 
We wish to prove that up to a finite \'etale covering, $X$ is a product
with a surface and $D$ is the inverse image of a curve under projection to
this surface.   

By Bogomolov decomposition theorem, we may assume that $X$ is the product of a 
torus $T$ and several irreducible hyperk\"ahler manifolds $H_j$ with $q(H_j)=0$ 
 (here $q$ denotes the irregularity $h^{1,0}$) and $h^{2,0}(H_j)=1$.

\

We distinguish two cases: 

\

{\bf First case: $X$ is not a torus.} We shall proceed by induction on the number of non-torus factors in the Bogomolov decomposition of $X$.

Since $X$ is not a torus, there is an irreducible hyperk\"ahler factor $H$
in the Bogomolov decomposition. If $X=H$, we are done. Otherwise, 
write $X=H\times Y$, where $Y$ is the product of the remaining factors. By K\"unneth formula, we have $\sigma_X=\sigma_H\oplus \sigma_Y$ on 
$TX\cong TH\oplus TY$, since $q(H)=0$. For $y\in Y$ general, 
let $D_y=D\cap (H\times \{y\})$. If this is empty, then $D=H\times D_Y$ for some divisor $D_Y$ of $Y$, which is smooth with algebraic characteristic foliation.
Indeed,
at any point of $D$ the $\sigma_X$-orthogonal to $TD$ is contained in the 
$\sigma_X$-orthogonal  to $TH\subset TD$, whereas $TH^{\perp}=TY$ since 
$\sigma_X$ is a direct sum. We conclude by induction in this case.

Therefore we may suppose that $D$ dominates $Y$. For $y\in Y$ generic, $D_y$ is a smooth non-uniruled divisor on $H\times y$. 
At any point $(h,y)\in D$ such that $D_y\neq H\times y$ is smooth at $h$, 
we have $TD_y=TD\cap TH$. Moreover, at such a point $TH\not\subset TD$ and 
thus, taking the $\sigma$-orthogonals, $\cF\not\subset TY$. We get $(TD_y)^{\perp}=TD^{\perp}\oplus TH^{\perp}=\cF\oplus TY$.  

Since $\sigma$ is a direct sum, the $\sigma_H$-orthogonal of $TD_y$ in $TH$
is the projection of $\cF$ to $TH$.

In other words: the characteristic foliation $\cF_{D_y}$ of $D_y$ inside $H$ is the projection on $TH$ of the characteristic foliation $\cF\subset TX$ along $D_y$. The leaves of $\cF_{D_y}$ are thus the \'etale $p_H$-projections of the leaves of $\cF$ along $D_y$, and so $\cF_{D_y}$ is  algebraic, with non-uniruled leaves. From theorem \ref{irreducibleHK}, we deduce that $H$ is a $K3$-surface, and the 
divisors $D_y$ are curves of genus $g>0$ for $y\in Y$ generic.

When $D_y$ is singular at $h$, one has $TH\subset TD$ at $(h,y)$, and therefore
at such points $\cF\subset TY$.

Fix any $h\in H$ and let $C_y$ denote the leaf of the characteristic foliation 
of $D$
through $(h,y)$. 
By isotriviality, all the curves $C_y$ are isomorphic to each other. When
$y$ varies in the fibre of $D$ over $h$, we thus have a positive-dimensional 
family of nonconstant maps $p_H: C_y\to H$ parameterized by a compact 
(but possibly not connected) variety $D^h$, and all images pass 
through the point 
$h\in H$. After a base-change $\alpha: Z\to D^h$ (not necessarily finite, but 
with $Z$ still compact) of the family of the
leaves, we have a map $p: C_y\times Z\to H$ mapping a section $c\times Z$ to
a point. By the rigidity lemma, all images $p_H(C_y)$ coincide when $y$
varies in a connected component of $Z$; therefore there is only a finite
number of curves $C_y$ through any $h\in H$. By the same reason, such a curve
(that is, the projection of a leaf of $\cF$ to $H$) 
does not intersect its small deformations in the family of the projections
of leaves. The family of such curves is thus at most a one-parameter family,
and there are only finitely many of them through any given point of $H$.

We are thus left with two cases: either all leaves of $\cF$ project
to the same curve on $H$, so that $p_H(D)=C\subset H$ is a curve and we are 
finished; or $p_H(D)=H$. In this last case, $H$ is covered by a one-parameter
family of curves $C_t$, which we may suppose irreducible, such that 
$C_t$ does not intersect its small deformations and there is only a finite
number of $C_t$ through a given point.

Notice also that these $C_t$ have to coincide with the connected components of the 
divisors $D_y$ and therefore the
generic $C_t$ is smooth. By adjunction formula, it is an elliptic curve and
$H$ is fibered in curves $C_t$.

We claim that every $C_t$ is non-singular. Indeed, suppose that some $C_t$
is singular at $h\in H$. It has to be a connected component of a $D_y$ for some
$(h,y)$ on a leaf of $\cF$ projecting to $C_t$. As we have remarked above, the
singularity of $D_y$ at $h$ means that $TH\subset TD$ and therefore $\cF\subset TY$
along a connected component of $p_H^{-1}(h)$. But such a component is 
of strictly positive dimension and therefore would contain a leaf of $\cF$.
So there are at least two leaves of $\cF$ through $(h,y)$, one projecting 
to $C_t$ and another to a point, which is absurd.

Since $H$ is a $K3$-surface, it does not admit an elliptic fibration without 
singular
fibers by topological reasons (non-vanishing of the Euler number). This is the contradiction excluding $p_H(D)=H$, and thus establishing theorem \ref{classification} when $X$ is not a torus. $\square$

\

{\bf Second case: $X=T$ is a torus.} We shall use Ueno's structure theorem for subvarieties of tori (\cite{U'}, Theorem 10.9). 

If $g>1$, then $\kappa(D)=1$. By Ueno's theorem there is a subtorus $K$ of codimension $2$ such that $D$ is the inverse
image of a curve on the quotient: $D=p^{-1}(C)$, where $p:T\to S:=T/K$ is the projection and $C\subset S$ is a curve 
of genus $g'>1$ on the abelian surface $S$. The $\sigma$-orthogonal space to $K$ gives canonically a two-dimensional linear
foliation $\cF_T$ on $T$, such that the intersections of its leaves with $D$ are the leaves of $\cF$, 
hence smooth compact curves which project in an \'etale way by $p$ onto $C$.

Let us show that the leaves of $\cF_T$ are compact. Take a leaf $C$ of $\cF$ through a point $x\in T$. 
It is contained in the leaf $L$ of $\cF_T$ through $x$. Choose a group structure on 
$T$
in such a way that $x=0$. The translate of $C$ by any point $a\in C$ is still
contained in the leaf $L$ since $L$ is linear; on the other hand, it is not equal
to $C$ for $a$ outside of a finite set, since $g(C)>1$. Since $L$ is two-dimensional
and contains a family of compact curves parameterized by a compact base, $L$ must
itself be compact. 

Therefore the leaves of $\cF_T$ are translates of an abelian surface $S'$. It 
suffices now to take a finite \'etale base-change from $S$ to $S'$ to get the
desired form $T'=K\times S'$, $D'=K\times C$, $\sigma$ direct sum of symplectic 
forms on $S'$ and $K$.

If $g=1$, then $\kappa(D)=0$, and $D$ is a subtorus of codimension $1$ with an elliptic fibration. There thus exists an elliptic curve $C\subset T$ and a quotient $\pi: T\to R=T/C$ such that $D=\pi^{-1}(V)$, where $V$ is a codimension $1$ subtorus of the torus $R$. Project $\rho: R\to R/V$, and consider the composition $p:T\to S:=R/V$. Then $S$ is an abelian surface, and $C':=p(C)$ is an elliptic curve on it. Moreover, $D=p^{-1}(C)$. Let $K$ be the kernel of $p$: this is a 
subtorus of $T$ of codimension $2$.
By Poincar\'e reducibility, there exists an abelian surface $S' \subset T$ such that $(S'\cap K)$ is finite. After a finite \'etale cover, $T'=S'\times K$, and $D'=C\times K$ is of the claimed form. 
$\square$

\begin{rem}\label{notsum} In this last case, $\sigma_T$ is in general not the direct sum of symplectic forms on $S'$ and $K$. Take for example $T=S\times A$, 
$D=E\times A$, for $S,A,E\subset S$ Abelian varieties of dimensions $2, (n-2), 1$ respectively, with linear coordinates $(x,y)$ on $S$, $(z_1,...,z_{n-2})$ on $A$, and 
$E$ given by $x=0$. Take $\sigma_S:=dx\wedge dy$, $\sigma_A$ arbitrary on $A$, and $\sigma=\sigma_S+\sigma_A+dx\wedge dz$, for any nonzero linear form $z$ 
on $TA$.
\end{rem}

\section{Application to the Lagrangian conjecture.}\label{lagrang}

Our aim is corollary \ref{csa} below. First we prove the following proposition.

\begin{proposition}\label{alb} Let $D\subset X$ be a smooth hypersurface in a connected compact K\"ahler manifold $X$ of dimension $2n$, carrying a holomorphic symplectic $2$-form $\sigma$. Denote by $\cF$ the characteristic foliation on $D$ defined by $\sigma$. Assume that 
$D$ admits a holomorphic fibration $\psi:D\to S$ onto an $(n-1)$-dimensional connected complex manifold $S$, such that its general fibre is a
lagrangian subvariety of $X$ of zero Kodaira dimension. Then

1. The foliation $\cF$ is $\psi$-vertical (ie: tangent to the fibres of $\psi$). 

2. Either the smooth fibres of $\psi$ are tori, and then $\psi$ is the restriction to $D$ of a holomorphic Lagrangian fibration $\psi'$ on some open neighborhood of $D$ in $X$; or their irregularity $q(F)$ is equal
to $n-1$. In this case the Albanese map $a_F:F\to Alb(F)$ is surjective and connected, and its fibres are elliptic curves which are the leaves of $\cF$. 
Moreover $F$ has a finite \'etale covering which is a torus.
\end{proposition}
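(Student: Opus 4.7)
For Part 1, I proceed by pointwise linear algebra: at any $x$ in a smooth fibre $F$ of $\psi$, the Lagrangian condition gives $(T_xF)^{\perp_\sigma}=T_xF$ inside $T_xX$, and since $T_xF\subset T_xD$, taking $\sigma$-orthogonals yields $(T_xD)^{\perp_\sigma}\subset T_xF\subset T_xD$; as $(T_xD)^{\perp_\sigma}$ is one-dimensional and lies in $T_xD$, it must coincide with $\cF_x=T_xD\cap(T_xD)^{\perp_\sigma}$, so $\cF_x\subset T_xF$ and $\cF$ is $\psi$-vertical. To set up Part 2, I combine the filtration $T_F\subset T_D|_F\subset T_X|_F$ with the trivialisations $N_{F/D}\cong\cO_F^{n-1}$ (smoothness of $\psi$ at $F$) and $N_{D/X}|_F\cong\cO_F(D)$, and use the Lagrangian isomorphism $N_{F/X}\cong\Omega^1_F$ induced by $\sigma$, to obtain the extension
\[ 0\to\cO_F^{n-1}\to\Omega^1_F\to\cO_F(D)\to 0. \]
In particular $q(F)\geq n-1$. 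Combined with Kawamata's inequality $q(F)\leq\dim F$, which applies because $\kappa(F)=0$, this gives the dichotomy $q(F)\in\{n-1,n\}$ driving Part 2.

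In Case A ($q(F)=n$), Kawamata's structure theorem for compact K\"ahler manifolds with $\kappa=0$ and maximal irregularity forces $F$ to be a complex torus. To produce the extension $\psi'$ of $\psi$ to a Lagrangian fibration on an open neighbourhood of $D$ in $X$, I plan to take $n-1$ local coordinates $\psi_1,\dots,\psi_{n-1}$ of $S$, regarded as holomorphic functions on $D$, extend them holomorphically across $D$ into $X$, verify that they Poisson-commute (this is automatic on $D$ since their common level sets are Lagrangian, and propagates to a neighbourhood by analyticity), and integrate their commuting holomorphic Hamiltonian vector fields to produce the sought-after Lagrangian fibration. The globalisation of this local Arnold--Liouville type construction over the whole family of lagrangian tori is the main technical obstacle of the argument.

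In Case B ($q(F)=n-1$), I make the sections of $\cO_F^{n-1}\subset\Omega^1_F$ explicit as the $1$-forms $\alpha_i(u)=\sigma(\tilde v_i,u)$ for $u\in T_F$, where $\tilde v_i\in T_D|_F$ is a lift of the $i$th basis vector of $N_{F/D}$. Any $u\in\cF$ is $\sigma$-orthogonal to all of $T_D$, so $\alpha_i|_{\cF}=0$; since the $\alpha_i$ span an $(n-1)$-dimensional subbundle of $\Omega^1_F$, they cut out $\cF|_F$ as their common kernel. Being holomorphic $1$-forms on $F$ they factor through $a_F$, so this common kernel also equals the relative tangent of $a_F$; hence the leaves of $\cF$ on $F$ coincide with the fibres of $a_F$. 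Kawamata's theorem on the Albanese in the $\kappa=0$ case then supplies surjectivity, connected fibres and $\kappa=0$ on each fibre, so the fibres are elliptic curves. Finally $\omega=\alpha_1\wedge\cdots\wedge\alpha_{n-1}$ is a nowhere vanishing $(n-1)$-form on $F$ with kernel $\cF|_F$, so Theorem \ref{isotriviality-form} applied to $(F,\omega)$ makes $a_F$ isotrivial; an isotrivial smooth elliptic fibration over an abelian variety acquires, after a finite \'etale cover, the structure of a principal elliptic bundle, and such a bundle over an abelian variety is itself a complex torus.
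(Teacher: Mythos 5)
Your Part 1 and the derivation of the dichotomy $q(F)\in\{n-1,n\}$ coincide with the paper's argument, but the rest has genuine gaps, concentrated in Case A. First, Kawamata's theorem with $\kappa(F)=0$ and $q(F)=n$ only gives that $F$ is \emph{bimeromorphic} to a torus, not that it is one (blow-ups of tori also satisfy these hypotheses); the paper upgrades this using the fact that $F$ carries the everywhere regular foliation $\cF|_F$, a step you simply assert away. More seriously, the extension of $\psi$ to a Lagrangian fibration $\psi'$ on a neighbourhood of $D$ is precisely the content of this half of the statement, and you leave it unproved, calling it ``the main technical obstacle''; the sketch you do give would not work: the $n-1$ functions pulled back from $S$ cut out $(n+1)$-dimensional, hence coisotropic rather than Lagrangian, level sets in $X$, and the claim that Poisson-commutativity ``propagates to a neighbourhood by analyticity'' is false, since $D$ is a hypersurface and a holomorphic function (e.g.\ a defining equation of $D$) can vanish on $D$ without vanishing nearby. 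The paper's route is short and avoids all of this: once $F$ is a torus, the Lagrangian isomorphism $N_{F/X}\cong\Omega^1_F\cong\cO_F^{\oplus n}$ shows that $F$ moves in an $n$-dimensional unobstructed family whose members sweep out a neighbourhood of $D$, and this family is the desired fibration $\psi'$.

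In Case B, your pointwise argument with the forms $\alpha_i(\cdot)=\sigma(\tilde v_i,\cdot)$ is correct and is a clean substitute for the paper's contradiction via the evaluation map $H^0(F,N_{F/X})\otimes\cO_F\to TX|_F$: the $\alpha_i$ are pointwise independent, vanish on $\cF$, and span $H^0(F,\Omega^1_F)$ because $q(F)=n-1$, so indeed $\ker da_F=\cF|_F$. But two of your later steps are not justified. Ellipticity of the $a_F$-fibres is not ``supplied by Kawamata'': Kawamata gives surjectivity and connectedness only; as in the paper you need Viehweg's $C_{n,n-1}$ to get $\kappa(\mathrm{fibre})\leq 0$, and non-uniruledness of $F$ (from $\kappa(F)=0$) to exclude genus $0$. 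And the closing claim that ``a principal elliptic bundle over an abelian variety is itself a complex torus'' is false in general (primary Kodaira surfaces are principal elliptic bundles over elliptic curves); one needs the K\"ahlerness of $F$ and torsion of the Chern class of the bundle, which you do not address — the paper instead obtains the \'etale torus cover directly from the $C_{n,n-1}$/Kawamata results it cites. Note also that Theorem \ref{isotriviality-form} is stated for projective $D$, while here $F$ is a priori only K\"ahler; this is harmless only because isotriviality of a smooth elliptic fibration over a compact base is automatic (constant $j$-function), so invoking that theorem is both unnecessary and, as stated, not applicable.
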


\begin{proof} The first claim is obvious, since, at any generic $x\in D$, the $\sigma$-orthogonal to $TD_x$ is included into the $\sigma$-orthogonal to $TF_x$
(where $F$ denotes the fibre of $\psi$ through $x$), which is equal to itself since $F$ is Lagrangian.

Since the deformations
of our Lagrangian fibres $F$ cover $D$, we have $q(F)=h^0(Y, \Omega^1_X)=h^0(Y,N_{Y/X})\geq dim(D)-dim(F)=n-1$. Note that $q(F)\leq n$, since
the Albanese map of a variety with zero Kodaira dimension is surjective with connected fibres by 
\cite{K}.

If $q(F)=n$, $F$ is bimeromorphic to a torus. Since it admits an  everywhere regular foliation, it must
be a torus. In this case $F$ deforms in an $n$-dimensional family and this
gives a fibration of a neighbourhood of $F$ in $X$ (indeed, the normal
bundle to $F$ in $X$ is trivial since it is isomorphic to the cotangent bundle
by the lagrangian condition).  Otherwise, $q(F)=n-1$ and the fibres of the Albanese map $a_F$ are one-dimensional.
In fact these are elliptic curves by $C_{n,n-1}$ (\cite{Vi}), and this also implies that 
$F$ has a finite \'etale covering which is a torus.

Finally, the leaves of $\cF$ inside $F$ are tangent to the fibres of $a_F$. Indeed, since $q=n-1$ and $F$ moves inside an $(n-1)$-dimensional smooth and unobstructed family of deformations (the fibres of $\psi$), all deformations of $F$ stay inside $D$, and the natural evaluation map 
$ev: H^0(F,N_{F/X})\otimes
\cO_F\to TX|_F$ must take its values in $T_{D\vert F}$. 
 
  Assume the leaves of $\cF$ are not the fibres of $a_F$. We can then choose a $1$-form $u$ on $Alb(F)$ such that $v=a_F^*(u)$ does not vanish on $\cF$ at the generic point $z$ of $F$. The vanishing hyperplane of $v_z$ in $TF_z$ is however $\sigma$-dual to a vector $t_z\in TX_z$, unique and a nonzero modulo $TF_z$, which corresponds to the $1$-form $v_z$ under the isomorphism $(N_F)_z\cong (\Omega^1_F)_z$ induced by $\sigma$ on the Lagrangian $F$. Since $v$ does not vanish on $\cF_z$ by assumption, $t_z\notin (T_D)_z$, which contradicts the fact that all first-order infinitesimal deformations of $F$ are contained in $D$.
\end{proof}

\begin{corollary}\label{csa} Assume that $X$ is an irreducible hyperk\"ahler manifold of 
dimension $2n$, and $D\subset X$ a smooth reduced and irreducible divisor. Assume that $K_D$ is semi-ample. Then $\cO_X(D)$ is semi-ample.
\end{corollary}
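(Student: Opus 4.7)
I split the argument according to $\nu(X,D)$. First, $D$ is nef: on a curve $C\subset D$, $D\cdot C=\deg(K_D|_C)\geq 0$ by adjunction and the nefness of $K_D$, while on $C\not\subset D$ the intersection is non-negative directly. By Lemma~\ref{nu}, $\nu(X,D)\in\{n,2n\}$. If $\nu(X,D)=2n$, then $D$ is nef and big, and Kawamata's base-point-free theorem (using $K_X\cong\cO_X$) gives $\cO_X(D)$ semi-ample. The real case is $\nu(X,D)=n$: here $q(D,D)=0$ and $\nu(K_D)=\kappa(K_D)=n-1$, so the Iitaka fibration $\psi\colon D\to S$ of $K_D$ has $\dim S=n-1$ and general fiber $F$ of dimension $n$ and Kodaira dimension zero.

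The key first step is to prove that $F$ is Lagrangian. Since $\psi$ is defined by $|mK_D|=|mD|_D|$ for $m\gg 0$, with sections lifted from $|mD|$ on $X$, the cycle class $[F]$ is a positive rational multiple of $D^n$. Thus $\sigma|_F=0$ is equivalent to $\int_X D^n\cdot\sigma\cdot\bar\sigma\cdot\omega^{n-2}=0$ for a Kähler class $\omega$. Polarizing Fujiki's relation, this integral is a sum over perfect pairings of the inserted classes into Beauville-Bogomolov factors. Hodge type forces $\sigma$ to pair with $\bar\sigma$ (contributing the only nonzero $q$-pairing involving $\sigma$); the remaining $n-1$ pairings among $n$ copies of $D$ and $n-2$ copies of $\omega$ must include at least one $q(D,D)$-factor, since the alternative of $n$ mixed pairs $q(D,\omega)$ cannot exhaust the $\omega$-classes (a quick combinatorial check on the multiplicities $2a+b=n$, $b+2c=n-2$ shows $a\geq 1$). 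Because $q(D,D)=0$ all terms vanish, so $\sigma|_F=0$.

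Proposition~\ref{alb} now applies to $\psi$. In subcase~(ii), the characteristic foliation $\cF$ is algebraic with elliptic leaves ($g=1$) and $D$ is non-uniruled (as $\kappa(D)=n-1\geq 0$); Theorem~\ref{classification} exhibits a finite étale cover of $X$ splitting off a surface factor. Simple-connectedness of the irreducible hyperkähler $X$ makes the cover trivial, and irreducibility forces the other factor to be a point; thus $X$ is a $K3$ surface and $D$ a smooth elliptic curve with $D^2=0$, for which semi-ampleness of $\cO_X(D)$ is standard. Subcase~(i) gives a holomorphic Lagrangian fibration $\psi'\colon U\to S'$ on a neighborhood $U\supset D$ extending $\psi$, with $D=(\psi')^{-1}(\psi(D))$ the preimage of a divisor in $S'$.

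The main obstacle is globalizing $\psi'$ to a morphism $\phi\colon X\to\bar S$; once this is in place, $\cO_X(D)=\phi^*\cO_{\bar S}(H)$ is automatically semi-ample (the base $\bar S$ being projective by Matsushita's theorem on bases of Lagrangian fibrations on irreducible hyperkählers). For the extension I would invoke the deformation theory of Lagrangian submanifolds: a Lagrangian torus $F\subset X$ has $N_{F/X}\cong \Omega^1_F$ trivial of rank $n$, so its deformations in $X$ form an unobstructed $n$-parameter family which sweeps out $X$; assembling this family yields the required proper holomorphic extension $\phi\colon X\to\bar S$, agreeing with $\psi'$ on $U$.
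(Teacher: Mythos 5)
Your overall architecture matches the paper's: reduce to the isotropic case $q(D,D)=0$, show that the fibres of the Iitaka fibration of $K_D$ are Lagrangian, and apply Proposition~\ref{alb} to split into the torus case and the case of Albanese dimension $n-1$. Your polarized Fujiki computation proving $\sigma|_F=0$ is correct (the count $2a+b=n$, $b+2c=n-2$ indeed forces $a\geq 1$) and is a reasonable self-contained substitute for the paper's appeal to Matsushita \cite{M}. The genuine gap is at the decisive step of the torus case. Extending the Lagrangian fibration from a neighbourhood of $D$ to all of $X$ does not follow from unobstructedness of deformations of a Lagrangian torus: unobstructedness gives the $n$-parameter family and the fibration near $F$ (this much is already contained in Proposition~\ref{alb}), but it does not show that the deformed tori remain pairwise disjoint, do not degenerate, sweep out all of $X$, and assemble into a proper holomorphic map onto an $n$-dimensional base. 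That globalization is precisely Beauville's question, and it is the content of the theorems of Greb--Lehn--Rollenske \cite{GLR}, Hwang--Weiss \cite{HW} and Matsushita \cite{M'}, which the paper invokes at exactly this point (in dimension $4$ one may instead use \cite{A} and \cite{AC}). As written, ``assembling this family yields the required proper holomorphic extension'' asserts the hard theorem rather than proving it; you must either cite these results or supply an argument of comparable depth. (Also, the identification $\cO_X(D)=\phi^*\cO_{\bar S}(H)$ deserves a word: one uses that $D$ is vertical, hence its class is nef, isotropic and $q$-orthogonal to the isotropic pullback class, so the two are proportional.)

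A secondary issue concerns the other branch. You dispose of the case of Albanese dimension $n-1$ via Theorem~\ref{classification} (equivalently Theorem~\ref{irreducibleHK}) together with simple connectedness, but those theorems are stated and proved only for projective $X$ (their proofs rest on the generic semipositivity machinery), whereas Corollary~\ref{csa} is intended for compact K\"ahler irreducible hyperk\"ahler $X$: when $q(D,D)=0$ projectivity is not automatic, and the paper explicitly prepares the K\"ahler case of the $g=1$ statement in subsection 2.5 precisely for this proof. The paper's own argument avoids the problem numerically: the leaves are elliptic, so isotriviality holds in the K\"ahler setting, Corollary~\ref{liso} gives $\nu(K_D)=0$, while Lemma~\ref{nu} gives $\nu(K_D)=n-1$, a contradiction for $n\geq 2$. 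Your route is fine in the projective case, but you should either restrict the statement accordingly or replace this step by the numerical contradiction.
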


\begin{proof} If the Beauville-Bogomolov square $q(D,D)$ is positive, then $D$ is big, $X$ is projective and the statement
follows from Kawamata base point freeness theorem. So the interesting case is when $D$ is 
Beauville-Bogomolov isotropic. We have $K_D=\cO_X(D)|_D$. If $K_D$ is semi-ample, its Kodaira
dimension is equal to $\nu(K_D)=n-1$ (lemma \ref{nu}) and the Iitaka fibration $\psi$ is regular. The 
relative dimension
of $\psi$ is equal to $n$. In fact $q(D,D)=0$ implies that $\psi$ is lagrangian in the same way
as in \cite{M} (using that $K_D=\cO_D(D)$ and that a suitable positive multiple $m.F$ of the fibre $F$ is $\psi^*(H^{n-1})$ for some very ample line bundle $H$ on $S$).  By proposition \ref{alb},
we have two possibilities: either $F$ is a torus, and then the fibration $\psi$ extends near 
$D$, since $F$ must deform in an $n$-dimensional family; or $F$ is of Albanese dimension
$n-1$ and the characteristic foliation on $D$ is algebraic.

In the first case we conclude by \cite{GLR}, \cite{HW} and \cite{M'}. In the second case,
we notice that since $F$ has numerically trivial canonical bundle, the fibers of the
characteristic foliation, which by proposition \ref{alb} are tangent to $F$, must be
elliptic curves by adjunction formula. Therefore the characteristic foliation is
isotrivial, and corollary \ref{liso} together with the proof of theorem \ref{irreducibleHK} imply 
that this is impossible unless in the case $n=1$, which is well-known.

\end{proof}

Recall that the Lagrangian conjecture affirms that a non-zero nef Beauville-Bogomolov isotropic 
divisor is semiample (and thus there is a lagrangian fibration associated to some multiple of
such a divisor). Corollary \ref{csa} shows that the Lagrangian conjecture is true for an effective
smooth divisor on a holomorphic symplectic manifold of dimension $2n$, if the Abundance conjecture
holds in dimension $2n-1$. Since the Abundance conjecture is known in dimension $3$,
we have the following:

\begin{corollary}\label{dim4} Let $X$ be an irreducible hyperk\"ahler manifold of dimension $4$, and $D$ a nef divisor on $X$. Assume that $D$ is effective
and smooth. Then $\cO_X(D)$ is semi-ample. 
\end{corollary}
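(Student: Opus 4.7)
The plan is to deduce this from Corollary \ref{csa} together with the Abundance conjecture in dimension three (due to Kawamata \cite{K'} in the projective case and extended in \cite{CHP} to the compact K\"ahler case). The whole argument reduces to producing semi-ampleness of $K_D$ starting from nefness of $D$.

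First, I would use that since $X$ is irreducible hyperk\"ahler, $K_X \cong \mathcal{O}_X$, so by adjunction $K_D = \mathcal{O}_X(D)|_D$. Nefness of $D$ on $X$ then passes by restriction to nefness of $K_D$ on the smooth compact K\"ahler threefold $D$ (which is moreover projective when $q(D,D)>0$). Abundance in dimension three therefore upgrades $K_D$ from nef to semi-ample, and Corollary \ref{csa} applies directly to give that $\mathcal{O}_X(D)$ is semi-ample.

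A minor technical wrinkle is that Corollary \ref{csa} is stated for an irreducible divisor, whereas a smooth effective divisor is in general a finite disjoint union $D = \bigsqcup_i D_i$ of its smooth irreducible components. Disjointness forces $D_i|_{D_j}=0$ for $i\neq j$, so $K_{D_i} = D|_{D_i}$ remains nef and hence semi-ample by the same appeal to abundance. Corollary \ref{csa} then yields semi-ampleness of each $\mathcal{O}_X(D_i)$; passing to a common multiple and using that tensor products of base-point-free line bundles are base-point-free, we recover semi-ampleness of $\mathcal{O}_X(D) = \bigotimes_i \mathcal{O}_X(D_i)$.

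There is no substantive obstacle here beyond the invocation of the Abundance conjecture in dimension three; everything else is adjunction, restriction of nefness, and the content of Corollary \ref{csa}. The only care required is the case distinction between projective and strictly K\"ahler $D$ --- governed by the sign of $q(D,D)$ --- which dictates whether one cites \cite{K'} or \cite{CHP} for the abundance input.
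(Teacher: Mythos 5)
Your proposal is correct and follows essentially the same route as the paper: adjunction gives $K_D=\cO_X(D)|_D$, nefness of $D$ restricts to nefness of $K_D$ on the threefold $D$, abundance in dimension three (\cite{K'}, or \cite{CHP} in the K\"ahler case) makes $K_D$ semi-ample, and Corollary \ref{csa} then gives semi-ampleness of $\cO_X(D)$. Your extra remark handling a possibly disconnected smooth $D$ component by component is a harmless and correct refinement of the same argument.
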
 

Notice that if $dim(X)=4$, we can use \cite{A} instead of \cite{GLR} and \cite{HW}, and \cite{AC} 
instead of \cite{M'}, so that the proof becomes more elementary in this case.

\medskip

{\bf Acknowledgements:} We are grateful to Jorge Pereira who suggested to look at $d-1$-forms rather than at 
$2$-forms, and to Michael McQuillan who asked a question which led us to a simplification of the 
original argument.

The first author's research was carried out within the 
National Research University Higher
School of Economics
Academic Fund Program for 2015-2016, research grant No. 15-01-0118.

\end{document}